\numberwithin{equation}{section}
\theoremstyle{definition}
\newtheorem{example}{example}[section]
\newtheorem{df}[example]{Definition}
\theoremstyle{remark}
\newtheorem{rem}[example]{	{\bf Remark}}
\theoremstyle{plain}
\newtheorem{prop}[example]{Proposition}
\newtheorem{thm}[example]{{\bf Theorem}}
\noindent\begin{trivlist}%
\hskip\labelsep{\bf Proof\ }]}%
\newtheorem{theorem}[example]{Theorem}
\theoremstyle{remark}
\def\Real{{\Bbb R}}
\def\Rc{\varrho} 
\def\rc{\textrm{Rc}} 
\theoremstyle{remark}
\begin{document}
\title{Examples of algebraic Ricci solitons in the Pseudo-Riemannian case} 
\author{Kensuke Onda\thanks{%
The author was supported by funds of Nagoya University and OCAMI. \newline 
2000 \emph{Mathematics Subject Classification:} 53C50, 53C21, 53C25. \newline
\emph{Keywords and phrases:} Lorentzian Lie groups, left-invariant metrics,
Ricci solitons, algebraic Ricci solitons.} }
\maketitle

 


\begin{abstract} 
This paper provides a study of algebraic Ricci solitons in the pseudo-Riemannian case. 
In the Riemannian case, all nontrivial homogeneous algebraic Ricci solitons are expanding algebraic Ricci solitons. 
In this paper, we obtain a steady algebraic Ricci soliton and a shrinking algebraic Ricci soliton in the Lorentzian setting. 
\end{abstract} 


\section{Introduction} 
The problem to construct a distinguished metric on a manifold is important in differential geometry. 
The Einstein structure and the Ricci soliton structure are candidates. 
For answering this question, the Ricci flow  introduced by Hamilton in \cite{H82} is an important tool. 
Let $g (t)$ be a $1$-parameter family of Riemannian metrics on a differentiable manifold $M^n$. 
If the Riemannian metric $g(t)$ satisfies the equation
$$\frac{\partial }{\partial t} g(t)_{ij} = -2\Rc [g(t)]_{ij}, $$
then $g(t)$ is called a solution to the Ricci flow. 

Hamilton proved that the Ricci flow on a closed manifold has a solution for a short time. 
It is the most basic result on the existence of the Ricci flow solution. 
Hamilton \cite{H82} proved that a closed $3$-manifold with positive Ricci curvature is diffeomorphic to $S^3$. 
Hamilton's idea in \cite{H82} was to use the normalized Ricci flow 
\begin{equation*}
  \frac{\partial }{\partial t} g(t) = \dfrac{2}{n} \Big( \dfrac{\int _{M^n} R d v}{\int _{M^n} d v} \Big) g - 2\Rc 
\end{equation*}
starting with the given Ricci positively curved metric on the given 3-manifold. He proved that 
the solution converges to a constant curvature metric exponentially fast. After Hamilton's work 
\cite{H82}, the study of the Ricci flow has been one of the central problems in differential geometry. 
For instance, Perelman \cite{P02} was able to prove Thurston's geometrization conjecture. 

From the definition of the Ricci flow, a fixed point of the Ricci flow is a Ricci-flat metric, 
and a fixed point of the normalized Ricci flow is an Einstein metric. 
From Proposition \ref{selfsimilar}, Ricci solitons change only by 
diffeomorphism and rescaling, and are regarded as generalized fixed points. 
In other words, 
let $\mathfrak{M} (M^n)$ be the space of Riemannian metrics on $M^n$, 
and $\mathfrak{D} (M^n)$ the diffeomorphism group of $M^n$. 
Considering the dynamical system of the Ricci flow on the moduli space 
$\mathfrak{M} (M^n) / \mathfrak{D} (M^n) \times \Real _+$, we regard Ricci solitons as fixed points. 

In this paper, we study left-invariant pseudo-Riemannian metrics and algebraic Ricci solitons on Loretzian Lie groups.
The concept of an algebraic Ricci soliton was first introduced by Lauret in the Riemannian case (see \cite{L01}). 
Lauret proved that algebraic Ricci solitons on homogeneous Riemannian manifolds are Ricci solitons.  
In general, problems for Ricci solitons are second-order differential equations. 
However, problems for algebraic Ricci solitons are algebraic equations. 
Therefore, algebraic Ricci solitons allow us to construct Ricci solitons in an algebraic way, i.e., using algebraic Ricci soliton theory, 
the study of Ricci solitons on homogeneous manifolds becomes algebraic. 

So far, Ricci solitons have been studied in the Riemannian case. 
Recently, the study of Ricci solitons in the pseudo-Riemannian setting has started with special attention to the Lorentzian case. 
In the Riemannian case, all homogeneous non-trivial Ricci solitons are expanding Ricci solitons. 
In the pseudo-Riemannian case, there are shrinking homogeneous non-trivial Ricci solitons discovered in \cite{O10}, 
while all vector fields of these Ricci solitons are not left-invariant. 
And 3-dimensional homogeneous Lorentzian Ricci solitons with left-invariant vector fields are classified in \cite{BCGG09}. 
Other results about Lorentzian Ricci solitons are found in \cite{BBGG10}, \cite{BGG11}, \cite{CD11}, \cite{CF11}.  

In this paper, we study algebraic Ricci solitons on Lorentzian Lie groups in the Lorentzian case. 
By using algebraic Ricci soliton theory, we can construct homogeneous Lorentzian Ricci solitons in an algebraic way. 
For example, we can construct the Ricci soliton in \cite{O10} by using algebraic Ricci solitons and Theorem \ref{thm}. 
Recall that all homogeneous non-trivial solvsolitons are expanding in the Riemannian setting.
In this paper, we construct Lorentzian algebraic Ricci solitons on the Heisenberg group $H_N$ and the oscillator groups $G_m (\lambda )$ and on three-dimensional Lorentzian Lie groups.  
In particular, we obtain new Lorentzian Ricci solitons on $H_N$ and $G_m (\lambda )$. 

This paper is organized as follows.
In Section 2, we introduce algebraic Ricci solitons in the pseudo-Riemannian case, 
and prove Theorem 2.5 which claims that algebraic Ricci solitons give rise to Ricci solitons.   
Sections 3, 4, and 5 contain Examples of solvsolitons on Lorentzian Lie groups. 
Particularly, in Section 3 and 4, we obtain the new Ricci solitons. 

\section{Algebraic Ricci solitons and Ricci solitons}
The concept of an algebraic Ricci soliton was first introduced by Lauret in the Riemannian case (see \cite{L01}). 
The definition extends to the pseudo-Riemannian case: 
\begin{df}
\textit{Let } $\left( \mathit{G,g}\right) $\textit{\ be a simply connected
Lie group equipped with the left-invariant pseudo-Riemannian metric $g$, and
let } ${\mathfrak{g}}$ denote \textit{the Lie algebra }of $\mathit{G.}$ 
\textit{Then $g$ is called an algebraic Ricci soliton if it satisfies } 
\begin{equation}
\rc =c\mathrm{Id}+D  \label{Soliton}
\end{equation}%
\textit{where }$\rc$\textit{\ denotes the Ricci operator, $c$ is a
real number, and }$\mathrm{D\in Der}\left( \mathfrak{g}\right) $. 
\textit{\ In particular, an algebraic Ricci soliton on a solvable Lie group,
(a nilpotent Lie group) is called a solvsoliton (a nilsoliton).}
\end{df} 
\hspace{-0.8cm} Recall that $\textrm{Der} ({\mathfrak g})$ denotes the Lie algebra of derivations of ${\mathfrak g}$, 
$$\textrm{Der} ({\mathfrak g}) 
:= \{ D\in {\mathfrak g l} ({\mathfrak g}) | D[X, Y] = [DX, Y] + [X, DY] \, \textrm{for any} \, X, Y\in \mathfrak g \} .$$
Obviously, Einstein metrics are algebraic Ricci solitons. 
In this paper, we consider solvsolitons on a solvable Lie group. 
\begin{rem}
Let $G$ be a semisimple Lie group, $g$ a Left-invariant Riemannian metric. 
If $g$ is a solvsoliton, then $g$ is Einstein (see \cite{L01}).  
\end{rem}

Next we introduce Ricci solitons. 
Let $g_0$ be a pseudo-Riemannian metric on a manifold $M^n$.
If $g_0$ satisfies
$$ \Rc [g_0] = c g_0 + L_X g_0  \ ,$$ 
where $\Rc$ is the Ricci tensor of $g_0$, $X$ is a vector field and $c$ is a constant, 
then $(M^n, g_0, X, c )$ is called a {\it Ricci soliton structure} and $g_0$ {\it the Ricci soliton.} 
Moreover, we say that the Ricci soliton $g_0$ is a {\it gradient Ricci soliton} 
if the vector field $X$ satisfies $X = \nabla f$ where $f$ is a function, 
and the Ricci soliton $g_0$ is a {\it non-gradient Ricci soliton} 
if the vector field $X$ satisfies $X\ne \nabla f$ for any function $f$. 
If $c$ is positive, zero, or negative,  
then $g_0$ is called a shrinking, steady, or expanding Ricci soliton, respectively.
According to \cite{CK04}, we check that a Ricci soliton is a Ricci flow solution.
\begin{prop}[see \cite{CK04}]\label{selfsimilar}
{\it A pseudo-Riemannian metric $g_0$ is a Ricci soliton if and only if 
$g_0$ is the initial metric of the Ricci flow equation,}  
$$\frac{\partial }{\partial t} g(t)_{ij} = -2 \Rc [g(t)]_{ij} \ ,$$
{\it and the solution is expressed as 
$g(t) = c(t)  (\varphi _t)^* g_0$, where $c(t)$ is a scaling parameter, 
and $\varphi _t$ is a diffeomorphism.} 
\end{prop}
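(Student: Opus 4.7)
The plan is to prove both implications by an explicit self-similar construction, following the classical Riemannian computation in \cite{CK04}. All ingredients used — diffeomorphism invariance of the Ricci tensor, the constant-rescaling identity $\Rc[\lambda g] = \Rc[g]$ for a scalar $\lambda \ne 0$, and the Lie-derivative formula $\partial_t \varphi_t^* g = \varphi_t^* L_{Y_t} g$ when $\varphi_t$ is generated by the (time-dependent) vector field $Y_t$ — are purely tensorial and insensitive to signature, so the pseudo-Riemannian case reduces to running exactly the same computation.

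For the forward direction, I would begin from the soliton identity $\Rc[g_0] = c g_0 + L_X g_0$, set $\sigma(t) = 1 - 2ct$, and let $\varphi_t$ be the flow at time $t$ of the time-dependent vector field $Y_t = -\tfrac{2}{\sigma(t)} X$, with $\varphi_0 = \mathrm{id}$. The ansatz is $g(t) := \sigma(t)\, \varphi_t^* g_0$. Direct differentiation yields
\[
\partial_t g(t) = \sigma'(t)\, \varphi_t^* g_0 + \sigma(t)\, \varphi_t^* L_{Y_t} g_0 = \varphi_t^* \bigl(-2c\, g_0 - 2 L_X g_0\bigr),
\]
while the scale- and diffeomorphism-invariance of the Ricci tensor give $\Rc[g(t)] = \varphi_t^* \Rc[g_0] = \varphi_t^*(c g_0 + L_X g_0)$. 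Comparing the two expressions, $\partial_t g(t) = -2\Rc[g(t)]$ on the maximal interval where $\sigma(t) \neq 0$, and $g(0) = g_0$ by construction.

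For the converse, suppose $g(t) = c(t)\, \varphi_t^* g_0$ with $c(0) = 1$ and $\varphi_0 = \mathrm{id}$ solves the Ricci flow. Differentiating the ansatz at $t = 0$ and evaluating the flow equation there gives
\[
c'(0)\, g_0 + L_Y g_0 = -2 \Rc[g_0], \qquad Y := \tfrac{d}{dt}\big|_{t=0} \varphi_t,
\]
so that choosing $c = -\tfrac{1}{2} c'(0)$ and $X = -\tfrac{1}{2} Y$ recovers the soliton equation $\Rc[g_0] = c g_0 + L_X g_0$.

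I do not expect any deep obstacle; the only delicate step is ensuring that the time-dependent vector field $Y_t$ actually generates a flow $\varphi_t$ on all of $M^n$ for the range of $t$ on which $\sigma(t) \neq 0$. In the intended applications $M^n = G$ is a simply connected Lie group and $X$ is (essentially) left-invariant, hence complete, so global existence of $\varphi_t$ is automatic and no further analytic input is needed.
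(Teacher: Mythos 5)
Your proof is correct, and there is nothing in the paper to compare it against: Proposition \ref{selfsimilar} is stated with only a citation to \cite{CK04} and no proof is given in the text. Your argument is precisely the standard self-similar-solution computation from that reference --- the ansatz $g(t)=\sigma(t)\varphi_t^*g_0$ with $\sigma(t)=1-2ct$ and $Y_t=-\tfrac{2}{\sigma(t)}X$ for the forward direction, differentiation at $t=0$ for the converse --- and, as you note, every ingredient (diffeomorphism invariance and scale invariance of $\Rc$, the Lie-derivative formula for pullbacks) is signature-independent, while completeness of the flow is automatic for the left-invariant fields arising in the paper's applications.
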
 
In the closed Riemannian case, Perelman \cite{P02} proved that any Ricci soliton is a gradient Ricci soliton, 
and any steady or expanding Ricci soliton is an Einstein metric with the Einstein constant zero or negative, respectively.
However in the non-compact Riemannian case, a Ricci soliton is not necessarily gradient and a steady or expanding Ricci soliton is not necessarily Einstein. 
In fact, 
any left-invariant Riemannian metric on the three-dimensional Heisenberg group 
is an expanding non-gradient Ricci soliton which is not an Einstein metric. (See \cite{BD07}, \cite{GIK06}, \cite{L07}.)
In the Riemannian case, all homogeneous non-trivial Ricci solitons are expanding Ricci solitons. 
In the pseudo-Riemannian case, there are shrinking homogeneous Ricci solitons discovered in \cite{O10}, 
while all three-dimensional homogeneous Lorentzian Ricci solitons are classified in \cite{BCGG09}. 
Other results about Lorentzian Ricci solitons are found in \cite{BBGG10}, \cite{BGG11}, \cite{CD11}. 

Lauret proved the relation of solvsolitons and Ricci solitons on Riemannian manifolds. 
\begin{theorem}[\cite{L01}]
Let $g$ be a left-invariant Riemannian metric. 
If $g$ is an algebraic Ricci soliton, then $g$ is a Ricci soliton. 
\end{theorem}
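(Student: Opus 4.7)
The plan is to produce the Ricci-soliton vector field directly from the derivation $D$ by exponentiating $D$ to a one-parameter group of Lie group automorphisms, and to verify the Ricci-soliton equation at the identity of $G$, then extending it to all of $G$ by left-invariance. Throughout, I use that $G$ is simply connected.

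First I would integrate $D$. Since $D \in \mathrm{Der}(\mathfrak g)$, the family $\varphi_t := \exp(tD)$ is a one-parameter subgroup of $\mathrm{Aut}(\mathfrak g)$, and by simple connectedness of $G$ it lifts uniquely to a one-parameter subgroup $\Phi_t \in \mathrm{Aut}(G)$ with $d\Phi_t|_e = \varphi_t$. Let $\tilde D$ denote the vector field on $G$ whose flow is $\Phi_t$; it vanishes at $e$ since $\Phi_t(e)=e$. Because $\Phi_t$ is a group automorphism, $\Phi_t^* g$ is again left-invariant, so $L_{\tilde D}\, g$ is a left-invariant $(0,2)$-tensor, determined by its value at the identity. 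Now, the algebraic Ricci-soliton equation $\rc = c\,\mathrm{Id} + D$ forces $D$ to be $g$-self-adjoint (since $\rc$ is, the Ricci tensor being symmetric, and $c\,\mathrm{Id}$ trivially is), hence $g(DY,Z) = g(Y,DZ)$ for all $Y,Z \in \mathfrak g$. Evaluating the Lie derivative at $e$ using $d\Phi_t|_e = \exp(tD)$,
\[
(L_{\tilde D}\, g)_e(Y,Z) \;=\; \frac{d}{dt}\bigg|_{t=0} g(\varphi_t Y, \varphi_t Z) \;=\; g(DY,Z) + g(Y,DZ) \;=\; 2\,g(DY,Z).
\]

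Lowering $\rc = c\,\mathrm{Id} + D$ to the $(0,2)$ Ricci tensor now yields, at $e$,
\[
\Rc[g](Y,Z) \;=\; g(\rc Y, Z) \;=\; c\, g(Y,Z) + g(DY,Z) \;=\; c\, g(Y,Z) + \tfrac{1}{2}(L_{\tilde D}\, g)(Y,Z),
\]
which is the Ricci-soliton equation with vector field $\tfrac{1}{2}\tilde D$ and constant $c$; by left-invariance, it propagates to every point of $G$. The only mildly subtle step is the $g$-self-adjointness of $D$: without it, the non-symmetric quantity $g(DY,Z)$ cannot be identified with half of a genuine Lie derivative, and the algebraic identity fails to collapse to a geometric one. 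An alternative route, bypassing this observation, is to exhibit the candidate self-similar Ricci-flow solution $g(t) = (1-2ct)\,\Phi_{s(t)}^{*} g$ with $s(t) = -\tfrac{1}{2c}\log(1-2ct)$ (or $s(t)=t$ when $c=0$), verify $\partial_t g(t) = -2\Rc[g(t)]$ directly, and invoke Proposition \ref{selfsimilar}.
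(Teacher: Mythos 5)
Your proposal is correct and follows essentially the same route as the paper's own proof of the (more general, pseudo-Riemannian) Theorem \ref{thm}: exponentiate the derivation $D$ to a one-parameter group of automorphisms, take the generating vector field, identify the Lie derivative of $g$ at the identity with the symmetrization of $g(D\cdot,\cdot)$, and lower the operator equation $\rc = c\,\mathrm{Id}+D$ to the Ricci tensor. The only differences are cosmetic --- the paper absorbs your factor of $\tfrac12$ by using $e^{tD/2}$ instead of $e^{tD}$, and it symmetrizes $g(De_i,e_j)+g(e_i,De_j)$ directly rather than invoking the $g$-self-adjointness of $D$ --- while your remarks on lifting to $\mathrm{Aut}(G)$ and propagating by left-invariance make explicit what the paper leaves implicit.
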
 
We extend the above proposition to the pseudo-Riemannian case. 
We obtain the following  
\begin{thm}\label{thm} 
Let $g$ be a left-invariant pseudo-Riemannian metric. 
If $g$ is an algebraic Ricci soliton, then $g$ is a Ricci soliton. 
\end{thm}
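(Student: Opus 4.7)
The plan is to extend Lauret's Riemannian argument to the pseudo-Riemannian setting, observing that every step is signature-free. The decisive input is that the Ricci operator $\rc$ is self-adjoint with respect to $g$: since the Ricci $(0,2)$-tensor $\Rc$ is symmetric and $g$ is non-degenerate, $g(\rc X, Y) = \Rc(X,Y) = \Rc(Y,X) = g(X, \rc Y)$. The algebraic soliton equation $\rc = c\,\mathrm{Id} + D$ therefore forces $D$ to be $g$-symmetric.

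Since $D \in \mathrm{Der}(\mathfrak{g})$, the one-parameter group $\{e^{tD}\}$ consists of Lie algebra automorphisms of $\mathfrak{g}$, and simple connectedness of $G$ lets us integrate it to $\{\Phi_t\} \subset \mathrm{Aut}(G)$ with $(d\Phi_t)_e = e^{tD}$. Let $V$ denote the complete vector field on $G$ generating $\Phi_t$.

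Next I would compute $L_V g$ via left-invariance. Since each $\Phi_t$ is a Lie group automorphism, $\Phi_t^* g$ is again a left-invariant metric, and at the identity
$$(\Phi_t^* g)_e(X, Y) = g_e\bigl(e^{tD}X,\, e^{tD}Y\bigr).$$
Differentiating at $t = 0$ and using the $g$-symmetry of $D$ yields $(L_V g)_e(X, Y) = g_e(DX, Y) + g_e(X, DY) = 2\, g_e(DX, Y)$. Both sides are left-invariant tensor fields, so the identity persists on all of $G$. The algebraic soliton equation then rewrites as
$$\Rc(X, Y) \;=\; g(\rc X, Y) \;=\; c\, g(X, Y) + g(DX, Y) \;=\; c\, g(X, Y) + \tfrac{1}{2}(L_V g)(X, Y),$$
which is precisely the Ricci soliton equation with vector field $\tfrac{1}{2} V$ and constant $c$, in the convention introduced just before the theorem.

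The main obstacle is conceptual rather than technical: one must verify that every Riemannian ingredient—self-adjointness of $\rc$, integration of derivations to automorphisms on simply connected $G$, and the compatibility of pullback with left-invariance—remains valid without positive definiteness, and none of them requires it. Alternatively, one can bypass the direct Lie-derivative check by invoking Proposition \ref{selfsimilar}: combining the automorphisms $\Phi_t$ with a scaling factor $a(t) = 1 - 2ct$ produces an explicit self-similar Ricci flow solution of the form $g(t) = (1 - 2ct)\,\Phi_{s(t)}^* g$ starting at $g$, where $s(t)$ is determined by an elementary ODE.
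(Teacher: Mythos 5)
Your proposal is correct and follows essentially the same route as the paper: integrate the derivation $D$ to a one-parameter family of automorphisms, compute the Lie derivative of $g$ along the generating field, and rewrite $\rc = c\,\mathrm{Id}+D$ as the Ricci soliton equation. The only cosmetic difference is that the paper absorbs the factor $\tfrac{1}{2}$ by taking $d\varphi_t|_e = e^{\frac{t}{2}D}$ rather than rescaling the vector field at the end, and you make explicit two points the paper leaves implicit (the $g$-self-adjointness of $\rc$ and the use of simple connectedness to integrate $e^{tD}$ to automorphisms of $G$).
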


\begin{proof}
Let $g$ be an algebraic Ricci soliton with $\rc = c I + D$. 
Let $\{ e_i\} _{i=1}^n$ be a pseudo-orthonormal basis. 
We define $\varphi _t$ satisfying $d\varphi _t\big|_e = e^{\frac{t}{2} D}$, and we define the vector field $X_D$, given by  
$X_D = \frac{d}{d t} \big|_0 \varphi _t (p)$. 
Then the Lie derivative of $X_D$ is given by 
$$(L_{X_D} g) (e_i, e_j)  = \frac{d}{d t} \big|_0 \varphi ^*_t g (e_i, e_j) = \dfrac{1}{2} \Big( g (D e_i , e_j) + g(e_i, D e_j)\Big), $$
 for any $i,j$. 
Therefore we obtain 
\begin{equation*}
\begin{split}
\Rc (e_i, e_j) & = \dfrac{1}{2} \Big( g \big( \rc (e_i) , e_j\big) + g\big( e_i, \rc (e_j) \big) \Big) \\ 
& = \dfrac{1}{2} \Big( g \big( (c I +D) e_i , e_j\big) + g\big( e_i, (c I +D) e_j \big) \Big) \\
& = cg  (e_i, e_j) + (L_{X_D} g)  (e_i, e_j).
\end{split} 
\end{equation*}

\end{proof}

It is natural to consider the opposite of Theorem \ref{thm}. 
A Riemannian manifold $(M, g)$ is called a solvmanifold if there
exists a transitive solvable group of isometries.
Jablonski proved 
\begin{theorem}[\cite{J11}] 
Consider a solvmanifold $(M, g)$ which is a Ricci soliton. 
Then $(M, g)$ is isometric to a solvsoliton and the transitive solvable group may be chosen to be completely solvable. 
\end{theorem}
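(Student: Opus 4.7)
The plan is to exploit the rigidity of the isometry group of a solvmanifold that satisfies the Ricci soliton equation, passing from a generic transitive action to one that realizes the soliton structure algebraically. Starting from $(M,g)$ with a transitive solvable group of isometries $H \subset \mathrm{Isom}(M,g)$, I would first invoke the fact that the Ricci soliton vector field $X$ on a homogeneous manifold generates a self-similar flow: the diffeomorphisms $\varphi_t$ obtained by flowing along $X$ lie in $\mathrm{Isom}(M,g(t))$ where $g(t)$ is the Ricci flow starting at $g$, and up to rescaling they pull $g$ back to $g$. This rigidity should force the infinitesimal action of $\varphi_t$ at a point to be conjugation by a one-parameter group of automorphisms of the Lie algebra of some transitive solvable subgroup.

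Next, the crucial intermediate notion is that of a semi-algebraic Ricci soliton: a left-invariant metric whose Ricci operator satisfies $\rc = cI + \tfrac{1}{2}(D + D^{t})$ for some $D \in \mathrm{Der}(\mathfrak{h})$, where $D^{t}$ denotes the metric transpose. I would first show, using the Lauret-type analysis described in the preceding argument for Theorem~\ref{thm} (run in reverse), that any solvmanifold Ricci soliton is semi-algebraic relative to $H$. The heart of the proof is then to \emph{upgrade} this to an honest algebraic Ricci soliton by changing the transitive group: one replaces $H$ by a suitable solvable subgroup $S \subset \mathrm{Isom}(M,g)$ obtained by adjoining to the nilradical $N(H)$ an abelian complement chosen so that $\mathrm{ad}$-action on $\mathfrak{n}$ is by symmetric (rather than merely normal) endomorphisms with respect to $g$. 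This produces an $S$-invariant metric for which the derivation $D$ is itself symmetric, hence $\rc = cI + D$ with $D \in \mathrm{Der}(\mathfrak{s})$.

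For the final clause, complete solvability of $S$, I would argue that in the construction above the adjoint action of the chosen abelian complement on $\mathfrak{n}$ has real eigenvalues: once one has arranged the $\mathrm{ad}$-operators to be symmetric for a positive definite metric, they are automatically diagonalizable over $\Real$, and this is precisely the condition that $S$ be completely solvable. Combined with the fact that the nilradical is always completely solvable, this gives complete solvability of the whole group $S$.

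The main obstacle is the second step, passing from semi-algebraic to algebraic by replacing $H$ with $S$. It requires a genuine understanding of the structure of $\mathrm{Isom}(M,g)$ for a solvmanifold Ricci soliton; in particular, one must show that there is always enough room inside the isometry group to realize the needed symmetric derivation as coming from an actual subgroup, and that the resulting $S$ still acts transitively. This is where Jablonski's argument uses deeper facts about standard decompositions of Einstein solvmanifolds and the uniqueness, up to isometry, of the transitive solvable action, and it is the step that has no analogue in the pseudo-Riemannian setting treated in the rest of the paper.
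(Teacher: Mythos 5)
The paper offers no proof of this statement: it is quoted from Jablonski \cite{J11} and used as a black box, so there is nothing internal to compare your argument against. Judged on its own terms, your proposal is a reasonable roadmap of the strategy that is actually used in the literature --- introduce semi-algebraic Ricci solitons as an intermediate notion, upgrade to a genuine algebraic soliton by replacing the transitive solvable group, and deduce complete solvability from real diagonalizability of symmetric operators with respect to a positive definite inner product --- but it is a roadmap, not a proof, and the two steps that carry essentially all of the content are asserted rather than established.

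First, semi-algebraicity of a solvmanifold Ricci soliton does not follow from ``running Theorem~\ref{thm} in reverse.'' The forward direction constructs $\varphi_t$ from a prescribed derivation; the converse requires showing that the self-similar diffeomorphisms of an arbitrary homogeneous Ricci soliton can be chosen to act, at a point, by automorphisms of the Lie algebra of some transitive group. This rests on the invariance of the isometry group along the Ricci flow and on a structural analysis of the one-parameter family $\varphi_t$ inside $\mathrm{Isom}(M,g)$, none of which appears in your sketch. Second, you explicitly defer the passage from semi-algebraic to algebraic --- the construction of a new transitive, completely solvable $S$ whose abelian part acts on the nilradical by symmetric derivations, together with the verification that $S$ is still transitive --- calling it the ``main obstacle.'' That step \emph{is} the theorem: it requires the Gordon--Wilson structure theory of transitive solvable isometry groups (standard position) and Lauret's characterization of solvsolitons via Einstein solvable extensions. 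Identifying where the difficulty lies is not the same as resolving it, so the proposal cannot be accepted as a proof; for the purposes of this paper the statement should remain an external citation.
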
 
In the Lorentzian case, there exists a Ricci soliton which is not an algebraic Ricci soliton.
Indeed, $SL(2, \Real)$ has left-invariant Lorentzian Ricci solitons (see \cite{BCGG09}). 
But, $SL(2,\Real)$ does not have an algebraic Ricci soliton which is not Einstein (see \cite{BO11a}). 

Lauret characterized an algebraic Ricci soliton by some properties in the Riemannian setting. 
The following was proved by Lauret. 
\begin{thm}[\cite{L01}]
Let $N$ be a nilpotent Lie group with Lie algebra ${\mathfrak n}$. 
If $g$ and $g'$ are two Ricci nilsoliton metrics on $N$, then $(N, g)$ is isometric to $(N', g)$ up to scaling. 
In particular, $g' = b \varphi . g$ for some $b>0$ and some $\varphi \in \textrm{Aut} ({\mathfrak n}) $. 
\end{thm}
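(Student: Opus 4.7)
The plan is to exploit the standard ``varying bracket'' reformulation of the space of left-invariant metrics on a fixed nilpotent Lie group, and then apply the Kempf--Ness theory (real moment map, minimal vectors) to obtain uniqueness up to a compact group action.

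First I would fix once and for all a vector space $V$ with an inner product $\langle\cdot,\cdot\rangle$, and identify the set of nilpotent Lie brackets on $V$ with a $\mathrm{GL}(V)$-invariant algebraic subset $\mathcal{N}\subset V^*\otimes V^*\otimes V$ via the action $(g\cdot\mu)(X,Y)=g\mu(g^{-1}X,g^{-1}Y)$. Under this identification, specifying a left-invariant metric on $N$ is the same as choosing a bracket $\mu$ in the $\mathrm{GL}(V)$-orbit of the structure constants of $\mathfrak{n}$, with the fixed background inner product playing the role of the metric. Two brackets $\mu,\mu'$ in the same orbit give rise to isometric (up to scaling) metrics if and only if they lie in the same $\mathbb{R}_{>0}\cdot O(V)$-orbit, because $O(V)$ is the stabilizer of the inner product and a global scaling of $\langle\cdot,\cdot\rangle$ corresponds to a multiple of the identity in $\mathrm{GL}(V)$.

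Next I would compute the Ricci operator $\mathrm{Rc}_\mu$ intrinsically in terms of $\mu$. A direct calculation (using that $\mathfrak{n}$ is nilpotent, so the Killing term vanishes) gives $\langle\mathrm{Rc}_\mu,\cdot\rangle_{\mathrm{End}(V)}=-\tfrac14\|\mu\|^2\,m(\mu)+$ a constant multiple of $\mathrm{Id}$, where $m\colon\mathcal{N}\setminus\{0\}\to\mathfrak{gl}(V)$ is the real moment map for the $\mathrm{GL}(V)$-action restricted to the symmetric operators. Tracing through the identification, the algebraic Ricci soliton equation $\mathrm{Rc}=cI+D$ with $D\in\mathrm{Der}(\mathfrak{n})$ becomes exactly the condition that $m(\mu)$ acts on $\mu$ by a scalar, which is the critical point condition for $\|\mu\|^2$ restricted to $\mathrm{GL}(V)\cdot\mu$, i.e.\ $\mu$ is a \emph{minimal vector} in its orbit.

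Now I would invoke the real Kempf--Ness theorem: within any $\mathrm{GL}(V)$-orbit in $\mathcal{N}$ that meets the minimal vector set, the minimal vectors form a single $O(V)$-orbit (up to the obvious radial scaling), and the function $g\mapsto\log\|g\cdot\mu\|^2$ is convex along one-parameter subgroups $e^{tX}$ with $X$ symmetric, strictly so transversally to the stabilizer. Given two nilsoliton metrics $g,g'$ on $N$, they correspond to two minimal vectors $\mu,\mu'$ in the orbit of the structure constants of $\mathfrak{n}$; by Kempf--Ness there exist $b>0$ and $k\in O(V)$ with $\mu'=b\,k\cdot\mu$. Translating back, $k$ intertwines the two bracket structures and hence defines an automorphism $\varphi\in\mathrm{Aut}(\mathfrak{n})$, while $b$ accounts for the scaling, yielding $g'=b\,\varphi.g$.

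The main obstacle is the second step: verifying carefully that the algebraic Ricci soliton equation on $\mathfrak{n}$ is equivalent to the minimal vector condition on $\mu$, which requires the precise identity relating $\mathrm{Rc}_\mu$ to the moment map and the observation that derivations of $\mu$ are exactly the elements of $\mathfrak{gl}(V)$ annihilating $\mu$ under the induced action. Everything else (convexity along geodesics in $\mathrm{GL}(V)/O(V)$ and uniqueness of minimal vectors) is then a direct application of real geometric invariant theory.
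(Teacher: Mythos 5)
The paper does not prove this theorem at all --- it is quoted verbatim from Lauret \cite{L01} (and is only stated here so that the author can point out that it \emph{fails} in the Lorentzian setting, via the non-isometric nilsolitons $g_1,g_2$ on $H_3$). So the only meaningful comparison is with Lauret's original argument, whose general framework (varying brackets on a fixed inner product space, the moment map for the $\mathrm{GL}(V)$-action on $\Lambda^2 V^*\otimes V$, real geometric invariant theory) you have correctly identified.

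However, there is a genuine error at the heart of your second and third steps: a nilsoliton bracket is \emph{not} a minimal vector, and the Kempf--Ness uniqueness theorem for minimal vectors cannot be applied. For any non-abelian nilpotent bracket $\mu$, the curve $t\mapsto (tI)\cdot\mu = t^{-1}\mu$ shows that $0\in\overline{\mathrm{GL}(V)\cdot\mu}$, so $\inf_g\|g\cdot\mu\|=0$ is not attained and the orbit contains no minimal vectors whatsoever; equivalently, $\operatorname{tr} m(\mu)$ is a negative multiple of the scalar curvature $-\tfrac14\|\mu\|^2$, so $m(\mu)$ never vanishes, whereas a critical point of $\|\cdot\|^2$ restricted to the orbit is exactly a zero of $m$. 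Your own chain of equivalences is inconsistent on this point: the condition ``$m(\mu)$ acts on $\mu$ by a scalar'' (which \emph{is} equivalent to $\rc_\mu=c\,\mathrm{Id}+D$ with $D\in\mathrm{Der}(\mu)$, since $\mathrm{Der}(\mu)$ is the annihilator of $\mu$ in $\mathfrak{gl}(V)$) is the critical-point condition for the scale-invariant functional $F(\mu)=\|m(\mu)\|^2/\|\mu\|^4$, not for the norm on the orbit. The uniqueness statement you then need is the correspondingly harder one --- that critical points of $F$ lying in a single $\mathrm{GL}(V)$-orbit form a single $O(V)\cdot\mathbb{R}^*$-orbit --- which is part of the real Kirwan--Ness stratification of the null cone (convexity of $\log\|e^{tX}\cdot\mu\|$ alone does not give it, since that argument is what proves uniqueness of \emph{minimal} vectors). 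This is precisely the result Lauret establishes and invokes in \cite{L01}; without it, your proof does not close. The first step (metrics on $N$ up to isometry and scaling correspond to brackets in $\mathrm{GL}(V)\cdot\mu$ up to $O(V)\cdot\mathbb{R}_{>0}$) and the final translation back to $g'=b\,\varphi.g$ are fine as stated.
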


However, the above theorem in the Lorentzian case does not hold. 
Indeed, we prove that the metrics $g_1$ and $g_2$ are nilsolitons on $H_3$. 
They are non-isometric by \cite{R92}. 

Lauret gave a characterization of Riemannian Ricci nilsolitons on homogeneous nilmanifolds via solvable extensions. 
\begin{df}
A metric solvable extension of $( {\mathfrak n} , g)$ is a metric solvable Lie algebra 
of the form $({\mathfrak s}  = {\mathfrak a} \oplus {\mathfrak n} , \tilde{g})$ such that 
$$
[{\mathfrak s}, {\mathfrak s}]' = {\mathfrak n} = {\mathfrak a} ^{\bot}, \quad 
[X, Y]' = [X, Y], \quad 
\tilde{g} (X,Y)  = g (X,Y), \hspace{0.5cm} {\rm for \, any} \, \, X, Y\in {\mathfrak n}
$$
where $[\cdot, \cdot ]'$ and $[\cdot, \cdot ]$ denotes the Lie brackets of ${\mathfrak s}$ 
and ${\mathfrak n}$ respectively. 
\end{df}

\begin{theorem}[\cite{L01}]
A homogeneous nilmanifold is a Riemannian nilsoliton if and only if ${\mathfrak n}$ admits a metric solvable extension with ${\mathfrak a}$ Abelian whose corresponding solvmanifold is Einstein.  
\end{theorem}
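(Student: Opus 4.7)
My approach hinges on the Ricci-operator formula for a metric semidirect product: if ${\mathfrak s}={\mathfrak a}\oplus{\mathfrak n}$ is a metric solvable Lie algebra with ${\mathfrak n}$ nilpotent, ${\mathfrak a}$ Abelian and orthogonal to ${\mathfrak n}$, and each $\mathrm{ad}_H|_{\mathfrak n}$ ($H\in{\mathfrak a}$) symmetric with respect to $g$, then $\rc_{\mathfrak s}$ is block-diagonal in this decomposition: $\rc_{\mathfrak s}|_{\mathfrak n}$ equals $\rc_{\mathfrak n}$ plus a correction built from the $\mathrm{ad}_H|_{\mathfrak n}$, and $\rc_{\mathfrak s}|_{\mathfrak a}$ is given by a trace form in the same $\mathrm{ad}_H$. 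This single decomposition is the engine for both directions.

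For the ($\Leftarrow$) direction, assume the solvmanifold associated to ${\mathfrak s}$ is Einstein with constant $c$. Since ${\mathfrak n}$ is an ideal in ${\mathfrak s}$, each $\mathrm{ad}_H|_{\mathfrak n}$ lies in $\mathrm{Der}({\mathfrak n})$, and its $g$-transpose does as well (under the symmetry hypothesis it coincides with $\mathrm{ad}_H|_{\mathfrak n}$ itself). Hence the correction term appearing in $\rc_{\mathfrak s}|_{\mathfrak n}-\rc_{\mathfrak n}$ lies in $\mathrm{Der}({\mathfrak n})$. Reading off $\rc_{\mathfrak s}|_{\mathfrak n}=c\cdot\mathrm{Id}$ then gives $\rc_{\mathfrak n}=c\cdot\mathrm{Id}+D$ with $D\in\mathrm{Der}({\mathfrak n})$, which is the nilsoliton condition.

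For the ($\Rightarrow$) direction, start from $\rc_{\mathfrak n}=c\cdot\mathrm{Id}+D$ with $D\in\mathrm{Der}({\mathfrak n})$ and replace $D$ by its $g$-symmetric part $D_s$, observing that the skew part can be absorbed by an inner modification that leaves the soliton relation intact. Then put ${\mathfrak a}:=\mathbb{R}H$ with $\mathrm{ad}_H|_{\mathfrak n}:=\alpha D_s$, and extend $g$ by declaring ${\mathfrak a}\perp{\mathfrak n}$ and $\tilde g(H,H):=\beta$. Substituting into the Ricci formula yields two scalar equations in the unknowns $(\alpha,\beta)$: one saying $\rc_{\mathfrak s}|_{\mathfrak n}$ is a multiple of $g$, the other that $\rc_{\mathfrak s}|_{\mathfrak a}$ is the \emph{same} multiple of $\tilde g|_{\mathfrak a}$.

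The main obstacle is the calibration step, namely verifying that these two equations share a common solution $(\alpha,\beta)$. This reduces to an algebraic identity linking $c$, $\mathrm{tr}(D_s)$, and $\mathrm{tr}(D_s^2)$, which would generally fail for an arbitrary derivation; its validity here rests on the trace identity $\mathrm{tr}(\rc_{\mathfrak n})=c\dim{\mathfrak n}+\mathrm{tr}(D_s)$ obtained by tracing the soliton equation, combined with the standard scalar-curvature formula for metric nilpotent Lie algebras (which ultimately pins $\mathrm{tr}(\rc_{\mathfrak n})$ down in terms of the bracket norms). Once compatibility is established, solving for $(\alpha,\beta)$ and plugging back produces the required Einstein metric solvable extension, completing the proof.
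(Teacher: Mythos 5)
The paper itself offers no proof of this statement --- it is quoted from Lauret \cite{L01} as a known theorem --- so there is no in-paper argument to compare yours against; I can only assess your sketch against Lauret's original one. Your architecture is the right one and is essentially his: the block formula for the Ricci operator of a metric solvable extension, reading off the nilsoliton equation on the ${\mathfrak n}$-block for the ($\Leftarrow$) direction, and a rank-one extension ${\mathfrak a}={\mathbb R}H$ with $\mathrm{ad}_H|_{\mathfrak n}$ proportional to $D$ for the ($\Rightarrow$) direction.

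The genuine gap is in your calibration step. The compatibility of your two scalar equations in $(\alpha,\beta)$ is exactly the identity $c\,\mathrm{tr}(D)+\mathrm{tr}(D^2)=0$, i.e.\ $c=-\mathrm{tr}(D^2)/\mathrm{tr}(D)$, the constant quoted in Section 2 of this paper. You propose to derive it from the traced soliton equation $\mathrm{tr}(\rc)=c\dim{\mathfrak n}+\mathrm{tr}(D)$ together with the scalar-curvature formula for nilpotent metric Lie algebras; but neither of those produces the quantity $\mathrm{tr}(D^2)$, so they cannot yield the identity. What is actually needed is the trace against $D$ rather than against the identity, $\mathrm{tr}(\rc\circ D)=c\,\mathrm{tr}(D)+\mathrm{tr}(D^2)$, combined with the nontrivial fact that $\mathrm{tr}(\rc\circ D)=0$ for \emph{every} derivation $D$ of a metric nilpotent Lie algebra. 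That vanishing is the heart of the matter; it comes from the structure-constant expression of the Ricci operator (equivalently, from the moment-map identity $4\,\mathrm{tr}(\rc\,\alpha)=\langle\pi(\alpha)\mu,\mu\rangle$ and $\pi(D)\mu=0$ for derivations), and your sketch neither states nor proves it. Two smaller points: in the Riemannian case $D=\rc-c\,\mathrm{Id}$ is automatically $g$-symmetric because the Ricci operator is, so no symmetrization is needed --- fortunately, since the symmetric part of a derivation is in general not a derivation, so ``replacing $D$ by $D_s$'' would not be a legitimate move otherwise; and in the ($\Leftarrow$) direction your Ricci formula presupposes each $\mathrm{ad}_H|_{\mathfrak n}$ is $g$-symmetric, a standardness hypothesis that is not part of the theorem's statement and must be either justified or circumvented, since the general correction term $\tfrac12[\mathrm{ad}_H,\mathrm{ad}_H^t]-S(\mathrm{ad}_{H_0})$ need not be a derivation.
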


It is not clear to the author whether the above result holds in the Lorentzian case. 
In the Riemannian case, if $({\mathfrak n}, g)$ satisfies $\rc (g) = c I_g + D_g$, 
then Lauret constructed a metric solvable extension $({\mathfrak s}, \tilde{g})$ of $({\mathfrak n}, g)$ that is Einstein $\Rc (\tilde{g}) = c_{g} I$. 
In this case, the constant $c_{g}$ satisfies $c_{g} = -{\rm tr} D^2_g /{\rm tr} D_g$, and ${\rm tr} D_g >0$.  
However, in the Lorentzian case there exists a solvsoliton that satisfies $c_{g} = 0, {\rm tr} D_g =0$ (see Section 4).

\section{The classical Heisenberg groups}
Let $n$ be a positive integer, and set $N = 2 n +1$. 
The higher-dimensional classical Heisenberg group $H_N$ is defined as the group of $(n+2)\times (n+2)$ upper triangular matrices
\begin{eqnarray*}
\left(
\begin{array}{ccc}
1 & {}^{t} \vec{a} & c \\
\vec{0} & I_n & \vec{b} \\
0 & ^{t} \vec{0} & 1 \\
\end{array}
\right) \ , 
\end{eqnarray*} 
where $\vec{a}, \, \vec{b} \in \Real^n, \, c \in \Real$.   
Topologically, $H_N$ is diffeomorphic to $\Real ^N$ under the map
\begin{eqnarray*}
H_3 \ni \left(
\begin{array}{ccccc}
1 & x_1 & \cdots & x_n & x_N \\
0 & 1 & \cdots & 0 & x_{1+n} \\
\vdots & \vdots & \ddots & \vdots & \vdots \\
0 & 0 & \cdots & 1 & x_{n+n} \\
0 & 0 & \cdots & 0 & 1 \\
\end{array}
\right) \mapsto (x_1, \cdots, x_n, x_{1+n}, \cdots, x_{2 n}, x_N) \in \Real^N .
\end{eqnarray*} 
Under this identification, left multiplication by $(\vec{a}, \vec{b}, c)$ corresponds to the map 
$$L_{(\vec{a}, \vec{b}, c)} (\vec{x}, \vec{y}, z)  = (\vec{a} + \vec{x} , \vec{b} + \vec{y} , c + z+ ^t \vec{a} \vec{y} ) \ .$$
Then the Lie algebra of $H_N$ has a basis consisting of
\begin{equation*}
F_i = \frac{\partial }{\partial x_i} , \quad  
F_{i+n} = \frac{\partial }{\partial x_{i+n}} + x_i \frac{\partial }{\partial x_N}, \quad  
F_N = \frac{\partial }{\partial x_N} ,
\end{equation*} 
for which 
$$[F_i, F_j] = [F_i, F_N] = [F_{i+n}, F_{j+n}] = [F_{i+n}, F_N] = 0, \, [F_i, F_{j+n}] = \delta _{i j} F_N. $$
We consider the left-invariant Lorentzian metric $g_2$ 
given by  
\begin{align*}
g_2 (F_1, F_1) = \cdots = g_2 (F_{2 n}, F_{2 n}) = -g_2 (F_N, F_N)= 1. 
\end{align*}
We remark that the above metric generalizes $g_2$ on $H_3$ in Section 5. 
In this section, we prove the following theorem. 

\begin{theorem}
The above metric $g_2$ is a nilsoliton. 
There exists a metric solvable extension which is an Einstein metric.
\end{theorem}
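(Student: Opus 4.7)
The plan is to handle the two assertions in turn: first verify that $g_2$ satisfies the algebraic Ricci soliton equation $\rc = cI + D$ for an explicit pair $(c, D)$ with $D \in \mathrm{Der}(\mathfrak{h}_N)$, and then construct a one-dimensional abelian extension of $(\mathfrak{h}_N, g_2)$ whose enlarged metric is Einstein.

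For the nilsoliton claim, I would first work out the Levi-Civita connection of $g_2$ from Koszul's formula on the basis $\{F_i, F_{i+n}, F_N\}_{i=1}^{n}$. Because the only non-trivial brackets are $[F_i, F_{i+n}] = F_N$, nearly all $\nabla_{E_a} E_b$ vanish; the surviving ones have the shape $\nabla_{F_i} F_{i+n} = \tfrac{1}{2} F_N$, $\nabla_{F_i} F_N = \tfrac{1}{2} F_{i+n}$, $\nabla_{F_{i+n}} F_i = -\tfrac{1}{2} F_N$, $\nabla_{F_{i+n}} F_N = -\tfrac{1}{2} F_i$, with signs governed by $g_2(F_N, F_N) = -1$. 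Substituting into $R(X,Y)Z = \nabla_X\nabla_Y Z - \nabla_Y\nabla_X Z - \nabla_{[X,Y]} Z$ and tracing via $\Rc(Y, Z) = \sum_k \epsilon_k\, g(R(E_k, Y) Z, E_k)$, the symmetry of the basis forces $\rc$ to be diagonal; explicit computation gives $\rc(F_i) = \rc(F_{i+n}) = \tfrac{1}{2}F_i$ (resp.\ $\tfrac{1}{2}F_{i+n}$) and $\rc(F_N) = -\tfrac{n}{2} F_N$. Writing $\rc = cI + D$ with $D = \mathrm{diag}(a, \ldots, a, b)$, the derivation condition applied to $[F_i, F_{i+n}] = F_N$ forces $b = 2a$, and the two eigenvalue equations $c + a = \tfrac{1}{2}$, $c + 2a = -\tfrac{n}{2}$ solve uniquely to $c = (n+2)/2$ and $a = -(n+1)/2$. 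This exhibits $g_2$ as a shrinking nilsoliton.

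For the Einstein extension, I would set $\mathfrak{s} = \mathbb{R}A \oplus \mathfrak{h}_N$ with $\mathfrak{a} = \mathbb{R}A$ abelian and $\mathrm{ad}(A)$ acting on $\mathfrak{h}_N$ by a derivation of the same shape as $D$, namely $[A, F_i] = [A, F_{i+n}] = p\,F_i$ and $[A, F_N] = 2p\, F_N$, and extend the metric by $\tilde g(A, \mathfrak{h}_N) = 0$, $\tilde g(A, A) = q \in \{\pm 1\}$. Reapplying Koszul on $\mathfrak{s}$ shows that the new connection $\tilde\nabla$ agrees with $\nabla$ on pairs inside $\mathfrak{h}_N$ but acquires additional $A$-components, chiefly $\tilde\nabla_{F_i} F_i = (p/q)\,A$, $\tilde\nabla_{F_N} F_N = -(2p/q)\,A$, $\tilde\nabla_{F_i} A = -p\,F_i$, $\tilde\nabla_{F_N} A = -2p\, F_N$. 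Computing $\rc_{\tilde g}$ on each of $A$, $F_i$, and $F_N$ and requiring a common eigenvalue $\lambda$ gives three equations for $(p, q, \lambda)$; they are consistent and force $q = -1$ (so $A$ must be timelike, and the signature of $\mathfrak{s}$ becomes $(2n, 2)$), $p^2 = 1/4$, and $\lambda = (n+2)/2$, producing the desired Einstein metric solvable extension.

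The main obstacle is sign bookkeeping. Because $F_N$ is timelike, several Koszul and curvature signs differ from those in the Riemannian Heisenberg computation, and these differences propagate non-trivially into the extension step once $A$ is also forced to be timelike; the Einstein system only closes after the choice $q = -1$. A minor but noteworthy feature is that the derivation used in the extension is a rescaling of the soliton derivation $D$ rather than $D$ itself, the precise scale being pinned down by the Einstein condition, and the resulting Einstein constant $\lambda = (n+2)/2$ is positive, in contrast with the negative constants that arise in Lauret's Riemannian construction.
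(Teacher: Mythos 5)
Your proposal follows the same route as the paper's proof: compute the Levi--Civita connection and Ricci operator of $g_2$ on the pseudo-orthonormal frame, solve $\rc = c\,\mathrm{Id}+D$ with a diagonal derivation $D=\mathrm{diag}(a,\dots,a,2a)$ (the relation $D^N_N=2a$ being forced by $[F_i,F_{i+n}]=F_N$), and then build a rank-one metric solvable extension with a timelike new direction and impose the Einstein condition. The only divergence is numerical: you find $\Rc(F_i,F_i)=\tfrac12$, hence $c=\tfrac{n+2}{2}$ and $a=-\tfrac{n+1}{2}$, whereas the paper states $\Rc(F_i,F_i)=\tfrac34 n-\tfrac14$, $c=2n-\tfrac12$, $D^i_i=-\tfrac54 n+\tfrac14$; the two sets of constants agree only at $n=1$. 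Your values appear to be the correct ones for general $n$: a direct curvature computation from the paper's own connection gives $\Rc(F_1,F_1)=\tfrac34-\tfrac14=\tfrac12$ independently of $n$ (only the $F_{1+n}$ and $F_N$ directions contribute to the trace), and $c=\tfrac{n+2}{2}$ is the value consistent both with the paper's own Einstein constant $\tfrac n2+1$ for the extension and with the relation $c=-\mathrm{tr}\,D^2/\mathrm{tr}\,D$ quoted in Section 2, while the paper's $c=2n-\tfrac12$ is not. Your remark that the Einstein extension necessarily has signature $(2n,2)$ (the new generator must be timelike) matches the paper's conclusion $\tilde g(H,H)=-4b^2<0$, and your use of a rescaled derivation with $\tilde g(A,A)=\pm1$ is equivalent, up to normalizing the new generator, to the paper's use of $D$ itself with $\tilde g(H,H)=-4b^2$.
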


\proof
The Levi-Civita connection of the metric $g_2$ is given by
\begin{eqnarray}
(\nabla _{F_\alpha } F_{\beta}) = \left(
\begin{array}{ccc}
O & (\frac{1}{2} \delta _{i j}  F_N) & (\frac{1}{2} F_{i+n}) \\
(-\frac{1}{2} \delta _{i j}  F_N) & O & (-\frac{1}{2} F_i) \\
(\frac{1}{2} F_{i+n}) & (-\frac{1}{2} F_i ) & O 
\end{array}
\right) 
\end{eqnarray} 
and the non-zero components of the Ricci tensor are  
$$ \Rc (F_i, F_i) = \Rc (F_{i+n},  F_{i+n}) =\frac{3}{4} n -\frac{1}{4}, \quad \Rc (F_N, F_N)  = \frac{n}{2},$$ 
where $i = 1, \cdots, n$.  
Its Ricci operator $\rc$ is given by  
\begin{eqnarray}
\rc =\left(
\begin{array}{cc}
\Big( \dfrac{3}{4} n -\dfrac{1}{4} \Big) I_{2 n} & O \\
O & -\dfrac{n}{2}  
\end{array}
\right) . 
\end{eqnarray} 

Next we compute the Lie algebra of derivations of ${\mathfrak g}$. 
By definition, we get 
\begin{eqnarray}
D =\left(
\begin{array}{ccccc}
D^1_1 & D^1_2 & \cdots & D^1_{2 n} & D^1_N \\
D^2_1 & D^2_2 & \cdots & D^2_{2 n} & D^2_N \\
\vdots & \vdots & \ddots & \vdots & \vdots \\
D^n_1 & D^n_2 & \cdots & D^{n}_{2 n} & D^n_N \\
\vdots & \vdots & \ddots & \vdots & \vdots \\
D^{2 n}_1 & D^{2 n}_2 & \cdots & D^{2 n}_{2 n} & D^{2 n}_N \\
D^N_1 & D^N_2 & \cdots & D^N_{2 n} & D^N_N \\
\end{array}
\right) , 
\end{eqnarray}
with 
\begin{subequations}
  \begin{align}
D_N^j & = D_N^{j+n} = 0, \\
D_N^N & = D^i_i + D_{i+n}^{i+n}, \\
D_i^{j+n} & = D_j^{i+n}, \\
D_i^j + D_{i+n}^{j+n} & = 0, \\
D_{i+n}^j & = D_{j+n}^i, \\
D_N^i & = D_N^{i+n} = 0 .
  \end{align}
\end{subequations} 
If $g$ satisfies 
the nilsoliton equation (\ref{Soliton}), then it satisfies 
\begin{eqnarray} 
\begin{cases}
c & = 2 n-\dfrac{1}{2} \\
D^i_i & = -\dfrac{5}{4} n + \dfrac{1}{4} \\
a^i_j & = 0 \quad otherwise. 
\end{cases} 
\end{eqnarray} 
Therefore $g_2$ is a nilsoliton.

Next we consider a solvable extension of $H_N$. 
Its algebra has generators $H$, $F_1$, $F_2$, $F_3$ and Lie brackets  
\begin{subequations}
  \begin{align*}
[H, F_i]' & = D F_i, \quad 
[F_i, F_j]' = [F_i, F_N]' = [F_{i+n}, F_{j+n}]' = [F_{i+n}, F_N]' = 0, \\ 
[F_i, F_{j+n}]' & =  \delta _{i j} F_N, 
  \end{align*}
\end{subequations} 
where $D$ satisfies the nilsoliton equation (\ref{Soliton}). 
We consider the left-invariant pseudo-Riemannian metric $\tilde{g}$ given by 
\begin{align*}
\tilde{g} (H,H)= a, \quad 
\tilde{g}_2 (F_1, F_1) = \cdots = \tilde{g}_2 (F_{2 n}, F_{2 n}) = -\tilde{g}_2 (F_N, F_N)= 1. 
\end{align*}
Then the Levi-Civita connection of the metric $\tilde{g}$ is given by 
\begin{eqnarray}
\left(
\begin{array}{cccc}
O & O & O & O \\
(-b F_i) & \Big(\dfrac{b}{a} \delta _{i j} H\Big) & \Big(\dfrac{1}{2} \delta _{i j} F_N\Big) & \Big(\dfrac{1}{2} F_{j+n}\Big) \\
(-b F_{i+n}) & \Big(-\dfrac{1}{2} \delta _{i j} F_N\Big) & \Big(\dfrac{b}{a} \delta _{i j} H\Big) & \Big(-\dfrac{1}{2} F_{j}\Big) \\
(-2 b F_N) & \Big(\dfrac{1}{2} F_{i+n} \Big) &  \Big(-\dfrac{1}{2} F_{j}\Big) & -2 \dfrac{b}{a} H
\end{array}
\right) , 
\end{eqnarray} 
where $b= -\dfrac{5}{4} n + \dfrac{1}{4}.$ 
And the non-zero components of the Ricci tensor are 
$$\Rc (H, H) = (-2 n-4) b^2, \quad \Rc (F_i, F_i) = \Rc (F_{i+n}, F_{i+n}) = (-2 n-2) \dfrac{b^2}{a} + \dfrac{1}{2},$$  
 $$\quad \Rc (F_N, F_N) =  (-2 n-2) \dfrac{b^2}{a} + \dfrac{1}{2}. $$

Therefore it is Einstein if and only if $a=-4 b^2$ and its Einstein constant is $\dfrac{n}{2} +1$. 
\qedhere

\begin{rem}
According to Theorem \ref{thm}, the left-invariant Lorentzian metric $g_2$ is a Ricci soliton. 
We will exhibited the Ricci soliton equation by derivation $D$. 
According to the proof of Theorem \ref{thm}, we define $\varphi _t $ by
$$
e^{\frac{t}{2} D} =
\begin{pmatrix}
e^{(-\frac{5}{8} n + \frac{1}{8}) t} I_{2 n}  & O \\
 O & e^{(-\frac{5}{4} n + \frac{1}{4}) t}
\end{pmatrix}
= d\varphi _t |_e. 
$$
Then we get 
\begin{eqnarray*}
\frac{\partial \varphi ^i_t}{\partial x_i} & = & e^{(-\frac{5}{8} n + \frac{1}{8}) t}, 
\frac{\partial \varphi ^{i+n}_t}{\partial x_{i+n}} = e^{(-\frac{5}{8} n + \frac{1}{8}) t}, 
\frac{\partial \varphi ^N_t}{\partial x_N} = e^{(-\frac{5}{4} n + \frac{1}{4}) t}, \\
\end{eqnarray*}
where $i = 1, \cdots , n$. 
We can solve above differential equation, and we obtain  
$$\varphi _t (p) = (e^{(-\frac{5}{8} n + \frac{1}{8}) t} x_1, \cdots ,  e^{(-\frac{5}{8} n + \frac{1}{8}) t} x_{2 n} \ , \quad e^{(-\frac{5}{4} n + \frac{1}{4}) t} x_N) $$
So the Ricci soliton vector field is exhibited as 
\begin{eqnarray*}
X_D & = & \frac{d}{d t} \Big| _{t=0} \varphi _t (p) \\
& = & \left( -\frac{5}{8} n + \frac{1}{8}\right) \left( x_i F_i +x_{i+n} F_{i+n}\right) +\left( -\frac{5}{4} n + \frac{1}{4}\right) \left( x_N -\frac{1}{2} \sum _i x_i x_{i+n}\right) F_N.
\end{eqnarray*}
It is easy check that $\nabla _i X_j -\nabla _j X_i \ne 0, $
therefore the left-invariant Lorentzian metric $g_1$ is a non-gradient Ricci soliton. 
\end{rem}

\section{The Oscillator groups}
In this section, we will see an example of a steady solvsoliton.  
The oscillator algebra ${\mathfrak g} _m (\lambda ) = {\mathfrak g} (\lambda _1, \cdots, \lambda _m )$ has $(2 m+2)$-generator $P$, $X_1$, $\cdots$, $X_m$, $Y_1$,  $\cdots$, $Y_m$, $Q$, and Lie brackets 
$$[X_i, Y_j] = \delta_{i j} P, \quad [Q, X_j] = \lambda _j Y_j, \quad [Q, Y_j] = -\lambda _j X_j. $$
That is, ${\mathfrak g} _m (\lambda )$ is the semidirect product of the Heisenberg algebra ${\mathfrak h} _m$ generated
by $(P, X_1, \cdots, X_m, Y_1, \cdots, Y_m)$, and the line generated by $Q$, under the homomorphism
${\rm ad} | {\mathfrak h} _m : \langle Q\rangle \rightarrow {\rm Der} ({\mathfrak h} _m)$. 
It is a solvable non-nilpotent Lie algebra and the connected simply connected Lie group whose Lie algebra is ${\mathfrak g} _m (\lambda )$ is {\it the oscillator group} $G_m(\lambda) = G(\lambda _1, \cdots, \lambda _m)$.

We consider the left-invariant metric defined by 
$g_{\varepsilon } (P, P) = g_{ \varepsilon }(Q, Q) = \varepsilon $, $g_{ \varepsilon }(X_i, X_j) = g_{ \varepsilon } (Y_i, Y_j) = \delta _{i j}$, 
and other components are zero. 
If $\varepsilon = 0$ and $\lambda _i = 1$ for each $i = 1, \cdots, m$, the corresponding Lorentzian metric is also right-invariant. 
In other cases, $g_{\varepsilon}$ is not bi-invariant (see \cite{GO99}). 
The Levi-Civita connection of the metric $g_{ \varepsilon }$ is given by
\begin{equation}
\begin{split}
\nabla _P X_j & =  -\dfrac{ \varepsilon }{2} Y_j= \nabla _{X_j} P, \, \, \nabla _{X_j} Q = -\dfrac{1}{2} Y_j, \\
\nabla _Q X_j & =  \Big( \lambda _j -\dfrac{1}{2} \Big) Y_j, \, \, \nabla _P Y_j = \dfrac{\varepsilon }{2} X_j = \nabla _{Y_j} P, \\
\nabla _{Y_j} Q & =  \dfrac{1}{2} X_j, \, \, \nabla _Q Y_j = - \Big(\lambda _j -\dfrac{1}{2} \Big) X_j, \\
\nabla _{X_j} Y_j & =  \dfrac{1}{2} P = - \nabla _{Y_j} X_j, 
\end{split} 
\end{equation}
and other components are zero. 
Its Ricci tensor is expressed by
\begin{eqnarray*}
\Rc (P, P) & = & \dfrac{\varepsilon ^2 m}{2}, \quad 
\Rc (P, Q) = \dfrac{\varepsilon m}{2}, \quad 
\Rc (X_j, X_j) = -\dfrac{\varepsilon }{2}, \\ 
\Rc (Y_j, Y_j) & = & -\dfrac{\varepsilon }{2}, \quad 
\Rc (Q, Q) = \dfrac{m}{2},
\end{eqnarray*}
and other components are $0$.
Its Ricci operator $\rc$ is given by 
\begin{eqnarray}
\rc =\left(
\begin{array}{cccc}
\dfrac{\varepsilon  m}{2} & O & O & \dfrac{m}{2} \\
O & -\dfrac{\varepsilon }{2} I_m & O &O \\
O & O & -\dfrac{\varepsilon }{2} I_m &O \\
O & O & O & O
\end{array}
\right) .
\end{eqnarray}

Next we compute the Lie algebra of derivations of ${\mathfrak g}$. 
We get 
$D \in \textrm{Der} \big( {\mathfrak g} _m (\lambda )\big)$ given by  
\begin{eqnarray*} 
D P & = & \alpha P, \quad 
D X_j = a_j P + \sum_{l} b^l_j X_l +\sum_{l} c^l_j Y_l,  \\
D Y_j & = & \tilde{a}_j P + \sum_{l} B^l_j X_l + \sum_{l} C^l_j Y_l, \quad 
D Q = \mu P -\sum_{l} \lambda _l a_l X_l -\sum_{l} \lambda _l \tilde{a}_l Y_l,  
\end{eqnarray*}
with 
\begin{eqnarray}
\alpha = b^i_i +C^i_i, \quad 
b^k_j + C_k^j = 0, \quad 
\lambda _j B^i_j = -\lambda _i c^i_j ,\quad \\
\lambda _j C^i_j = \lambda _i b^i_j ,\quad 
\lambda _j b^i_j = \lambda _i C^i_j ,\quad 
\lambda _j c^i_j = -\lambda _i B^i_j, 
\end{eqnarray}
for any $i, j\ne k$. 
We prove 
\begin{thm}
If $g_{\varepsilon}$ satisfies 
the solvsoliton equation (\ref{Soliton}), then it satisfies 
\begin{eqnarray} 
\begin{cases}
\varepsilon = c  = 0, \\
\mu  = \dfrac{m}{2}, \\
\alpha = a_j = \tilde{a}_j = b^i_j = c^i_j = 0, 
\end{cases} 
\end{eqnarray} 
for any $i, j$.
Therefore the left-invariant Lorentzian metric $g_0$ is a steady solvsoliton. 
\end{thm}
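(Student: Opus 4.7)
The plan is to impose the solvsoliton equation $\rc = cI + D$ basis-vector by basis-vector, using the explicit parametrizations of $\rc$ and of $D\in\textrm{Der}({\mathfrak g}_m(\lambda))$ established just above, and to extract the constraints on $c$, $\varepsilon$, and the entries of $D$.

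First I evaluate both sides on $X_j$. Since $\rc X_j = -(\varepsilon/2) X_j$ has no $P$- or $Y$-component, matching coefficients with $(cI+D)X_j = c X_j + a_j P + \sum_l b^l_j X_l + \sum_l c^l_j Y_l$ immediately forces $a_j = 0$, $c^l_j = 0$ for all $l$, and $b^l_j = -(c+\varepsilon/2)\,\delta_{lj}$. The symmetric computation on $Y_j$ gives $\tilde{a}_j = 0$, $B^l_j = 0$, and $C^l_j = -(c+\varepsilon/2)\,\delta_{lj}$.

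Next I evaluate on $Q$. Since $\rc Q = (m/2) P$ lies in the $P$-line, whereas $(cI+D)Q = c Q + \mu P - \sum_l \lambda_l a_l X_l - \sum_l \lambda_l \tilde{a}_l Y_l$, the vanishing $Q$-coefficient forces $c = 0$ and the $P$-coefficient forces $\mu = m/2$, while the $X$- and $Y$-coefficients are already zero by the previous step. Finally I evaluate on $P$. We have $\rc P = (\varepsilon m/2) P$ and $(cI+D)P = cP + \alpha P = \alpha P$, so $\alpha = \varepsilon m/2$. But the derivation identity $\alpha = b^i_i + C^i_i$ (no sum, holding for each $i$), combined with $b^i_i = C^i_i = -\varepsilon/2$ from the first step and $c=0$, gives $\alpha = -\varepsilon$. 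Equating the two expressions for $\alpha$ yields $\varepsilon(m+2)/2 = 0$, hence $\varepsilon = 0$. Substituting back, all remaining $b$'s, $c$'s, $B$'s, $C$'s vanish together with $\alpha$, and only $\mu = m/2$ survives.

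The only real subtlety is the last compatibility: $\varepsilon$ appears once as the Ricci eigenvalue on $P$ and once through the trace-like derivation identity $\alpha = b^i_i + C^i_i$, and the two appearances are incompatible unless $\varepsilon = 0$. This simultaneously forces $c = 0$, producing a steady solvsoliton with $\textrm{tr}(D) = 0$ — precisely the Lorentzian phenomenon, impossible in the Riemannian setting, that is flagged in the remark closing Section 2.
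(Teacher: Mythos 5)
Your proposal is correct and follows the same route the paper intends: the paper states this theorem immediately after computing the Ricci operator and the general derivation of $\mathfrak{g}_m(\lambda)$ and leaves the coefficient-matching in $\rc = c\,\mathrm{Id} + D$ unwritten, and your argument supplies exactly that matching, including the decisive clash between $\alpha = \varepsilon m/2$ (from the $P$-equation) and $\alpha = b^i_i + C^i_i = -\varepsilon$ (from the derivation identity) that forces $\varepsilon = 0$.
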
 

The solvable extension of the oscillator algebra ${\mathfrak g} _m (\lambda ) = {\mathfrak g} (\lambda _1, \cdots, \lambda _m )$ has $(2 m+3)$-generator $H$, $P$, $X_1$, $\cdots$, $X_m$, $Y_1$,  $\cdots$, $Y_m$, $Q$, and Lie brackets 
$$[H, Q] = D Q = \dfrac{m}{2} P, \quad [X_i, Y_j] = \delta_{i j} P, \quad  [Q, X_j] = \lambda _j Y_j, \quad [Q, Y_j] = -\lambda _j X_j, $$
where $D$ satisfies the solvsoliton equation (\ref{Soliton}). 
We consider the metric $\tilde{g}$, given by $\tilde{g} (H, H) = a$ and $\tilde{g} (F_i, F_j) = \delta _{i j}$. 
Then the Levi-Civita connection of $\tilde{g}$ is given by 
\begin{equation}
\begin{split}
\nabla _Q H & = -\dfrac{m}{2} P, \quad 
\nabla _{X_i} Y_j = \dfrac{\delta_{i j}}{2} P = - \nabla _{Y_j} X_j, \quad 
\nabla _{X_i} Q  = -\dfrac{1}{2} Y_i, \\
\nabla _Q X_j & =  \Big( \lambda _j -\dfrac{1}{2} \Big) Y_j, \quad 
\nabla _{Y_i} Q  = \dfrac{1}{2} X_i, \\
\nabla _Q Y_j & =  -\Big( \lambda _j -\dfrac{1}{2} \Big) X_j, \quad 
\nabla _Q Q = \dfrac{m}{2 a} H, 
\end{split} 
\end{equation}
and other components are zero. 
The curvature tensor is given by
\begin{eqnarray}
R(Q, X_i) X_j & = & \dfrac{\delta _{i j}}{4} P, \quad 
R(Q, Y_i) Y_j = \dfrac{\delta _{i j}}{4} P, \\
R(X_i, Q) Q & = & \dfrac{1}{4} X_i, \quad 
R(Y_i, Q) Q = \dfrac{1}{4} Y_i. 
\end{eqnarray}
Its Ricci tensor vanishes except $\Rc (Q, Q)  = \dfrac{m}{2}$.
Hence this is not Einstein. 

\begin{rem} 
Next, we consider solvable extensions of $G_m (\lambda )$. 
However, there are derivations of the Lie algebra such that the solvable extension of $G_m (\lambda )$ is Einstein. 
Actually, when $m=1$, we consider the derivation of the Lie algebra given by 
\begin{align*} 
[H, P] & = D P = 2 b P, \, [H, X] = a P + b X + c Y, \, [H, Y] = \tilde{a} P - c X + b Y,  \\ 
[H, Q] & = k P - \lambda a X - \lambda \tilde{a} Y, \, [X, Y] = P, \, [Q, X] = \lambda Y, \, [Q, Y] = -\lambda X. 
\end{align*} 

Then the Levi-Civita connection of the metric $g_1$ is given by 
\begin{align} 
\nabla _H P & = -\nabla _P H = b P, \quad  \nabla _H X = \dfrac{\lambda + 1}{2} a P + c Y, \\
\nabla _X H & = \dfrac{\lambda - 1}{2} a P -b X, \quad 
 \nabla _H Y  = \dfrac{\lambda + 1}{2} \tilde{a} P - c Y, \\  
\nabla _X H & = \dfrac{\lambda - 1}{2} \tilde{a} P -b X, \quad 
 \nabla _H Q = -\dfrac{\lambda + 1}{2} a X -\dfrac{\lambda + 1}{2} \tilde{a} Y - b Q, \\
\nabla _Q H & = -k P + \dfrac{\lambda - 1}{2} a X +\dfrac{\lambda - 1}{2} \tilde{a} Y -b Q, \quad 
\nabla _P Q = \nabla _Q P = \dfrac{b}{h} H, \\
\nabla _ X X & = \nabla _Y Y = \dfrac{b}{h} H, \quad 
\nabla  _X Y = -\nabla _Y X = \dfrac{1}{2} P, \\
\nabla _X Q & = \dfrac{1- \lambda}{2 h} a H -\dfrac{1}{2} Y, \quad
\nabla _Q X = \dfrac{1- \lambda}{2 h} a H +\Big( \lambda -\dfrac{1}{2} \Big) Y, \\
\nabla _Y Q & = \dfrac{1- \lambda}{2 h} \tilde{a} H +\dfrac{1}{2} X, \quad 
\nabla _Q Y = \dfrac{1- \lambda}{2 h} \tilde{a} H -\Big( \lambda -\dfrac{1}{2} \Big) X, \\
\nabla _Q Q & = \dfrac{k}{h} H. 
\end{align} 

The non-zero components of the Ricci tensor are 
\begin{align} 
\Rc (H, H) & = -4 b^2, \quad  \Rc (X, X) = \Rc (Y, Y) = \Rc (P, Q) = -\dfrac{4 b^2}{h}, \quad \\
\Rc (Q, Q) & = \dfrac{1-\lambda ^2}{2 h} (a^2 +\tilde{a} ^2) - \dfrac{2 k b}{h} +\dfrac{1}{2},  \\
\Rc (X, Q) & = \dfrac{\lambda -1}{2 h} (3 a b +\tilde{a} c),  \quad 
\Rc (Y, Q) = \dfrac{\lambda -1}{2 h} (3 \tilde{a} b -a c).  
\end{align} 
When $\lambda = 1$, the metric $\tilde{g}$ is Einstein if and only if $h=4 k b$ and $k b\ne 0$. 
When $\lambda \ne 1$, the metric $\tilde{g}$ is Einstein if and only if $h=4 k b$, $a = \tilde{a} =0$ and $k b\ne 0$, 
or $b = c=0$, $h=-(\lambda^2 -1) (a^2 + \tilde{a}^2)$, $a \tilde{a} \ne 0$. 
\end{rem}

\section{Three-dimensional left-invariant Lorentzian Ricci solitons} 
In this section, we consider left-invariant Lorentzian metrics on $3$-dimensional Lie groups. 
The Ricci solitons that we consider in this section were constructed in $\cite{O10}$. 

\subsection{Nilsolitons on $H_3$}
Let $H_3$ be the $3$-dimensional Heisenberg group. 
N. Rahmani and S. Rahmani \cite{RR06} proved that any left-invariant Lorentzian metric on $H_3$ is classified into three types 
$g_1$, $g_2$ and $g_3$, up to isometry and scaling, given by  
\begin{align*}
g_1 & =  -dx^2 +dy^2 +(x\ dy +dz)^2 \ , \\
g_2 & =  dx^2 +dy^2 -(x\ dy +dz)^2 \ , \\
g_3 & =  dx^2 +(x \ dy +dz)^2 -\big( (1-x) dy -dz\big) ^2 \ .
\end{align*} 
Nomizu \cite{N79} proved that $g_3$ is flat. 
N. Rahmani and S. Rahmani \cite{RR06} showed that $g_1$ and $g_2$ are not Einstein. 
In \cite{O10}, we proved that the left-invariant Lorentzian metric $g_1$ is a Lorentzian Ricci soliton. 
My previous paper \cite{O10} contains a mistake that $g_2$ is Einstein. 
I correct it and introduce the following theorem. 

\begin{theorem}[\cite{N79, O10}]
On the $3$-dimensional Heisenberg group, the metrics $g_1$ and $g_2$ are shrinking non-gradient Ricci solitons,  and $g_3$ is flat. 
\end{theorem}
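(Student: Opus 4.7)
The theorem splits into three essentially independent pieces. Flatness of $g_3$ is Nomizu's result in \cite{N79}, which we simply invoke. The claim for $g_1$ was essentially proved in \cite{O10}, so it suffices to supplement that reference with the verification that the soliton constant is positive (shrinking) and the soliton vector field is non-gradient, both of which follow the same template as for $g_2$. The central new task is therefore the $g_2$ statement, which replaces the (mistaken) Einstein assertion in \cite{O10}.

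For $g_2$ on $H_3$, the natural approach is to observe that $g_2$ is precisely the metric studied in Section 3 in the case $n=1$, since $H_3 = H_N$ with $N = 2n+1 = 3$. The computation there shows that $g_2$ satisfies the nilsoliton equation $\rc = cI + D$ with $c = 2n - 1/2 = 3/2 > 0$ and diagonal derivation $D = \mathrm{diag}(-1, -1, -2)$ in the basis $\{F_1, F_2, F_N\}$. Applying Theorem \ref{thm}, $g_2$ is a Ricci soliton, and since $c > 0$ it is shrinking. For the non-gradient property, I would write $X_D$ explicitly following the Remark at the end of Section 3, and verify that the metric-dual $1$-form $g_2(X_D, \cdot)$ fails to be closed, equivalently $\nabla_i X_j - \nabla_j X_i \not\equiv 0$; this reduces to a single exterior-derivative computation in the left-invariant coframe.

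For $g_1$, if one wishes to reprove the result independently of \cite{O10}, one proceeds in exactly the same way using the left-invariant frame $\{E_1, E_2, E_3\}$ dual to the coframe $\theta^1 = dx$, $\theta^2 = dy$, $\theta^3 = x\,dy + dz$, in which $g_1$ is diagonal of signature $(-,+,+)$ and the Heisenberg bracket is $[E_1, E_2] = -E_3$. Koszul's formula yields $\nabla$ and hence $\rc$; one reads off $(c, D)$ with $c > 0$, checks $D \in \mathrm{Der}(\mathfrak{h}_3)$ against the parameterization already given in Section 3, applies Theorem \ref{thm}, and performs the same non-gradient check. The principal obstacle is careful bookkeeping of signs in the pseudo-Riemannian setting: for $g_1$ the timelike direction $E_1$ is transverse to the center $E_3$ of $\mathfrak{h}_3$, whereas for $g_2$ the timelike direction is the central generator itself, so the Section 3 formulas cannot be copied across to $g_1$ and sign flips appear when raising indices from the Ricci tensor to the Ricci operator. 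A secondary technical point is the non-gradient verification; rather than attempting to rule out $X_D = \nabla f$ for all smooth $f$, the cleanest criterion is the closedness test on the metric-dual $1$-form of $X_D$, which is invariant and reduces to computing a single $d$ in the left-invariant coframe.
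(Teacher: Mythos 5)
Your proposal is correct and follows essentially the same route as the paper: the paper also disposes of $g_3$ via Nomizu, obtains $g_2$ as the $n=1$ case of the $H_N$ nilsoliton computation of Section~3 (with $c=3/2>0$ and the explicit non-left-invariant vector field $X_D$ checked to be non-gradient via $\nabla_i X_j - \nabla_j X_i \neq 0$), and handles $g_1$ by the analogous frame computation showing it is a nilsoliton with the same constant $c=3/2$, then invokes Theorem~\ref{thm}. Your remarks about the differing position of the timelike direction relative to the center for $g_1$ versus $g_2$, and the closedness test on the metric-dual one-form, match the paper's computations exactly.
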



In this section, we prove the following theorem. 
\begin{theorem}
The above metrics $g_1$ and $g_2$ are nilsolitons.
There exist metric solvable extensions that are Einstein metrics.
\end{theorem}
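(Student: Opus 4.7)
The plan is to handle $g_1$ and $g_2$ in parallel on the three-dimensional Heisenberg Lie algebra $\mathfrak{h}_3$ with basis $\{F_1, F_2, F_3\}$ and sole nonzero bracket $[F_1, F_2] = F_3$. The two metrics correspond to the two inequivalent placements of the timelike direction relative to the center $\textrm{span}(F_3)$: for $g_2$ the center is timelike (the $n = 1$ case of Section 3), while for $g_1$ the timelike direction is orthogonal to the center. The $g_2$ part therefore reuses the computation of Section 3 verbatim at $n = 1$, and the essentially new work concerns $g_1$.

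For each metric I would fix the pseudo-orthonormal frame with signs $\varepsilon_i = g(F_i, F_i)$, apply the Koszul formula in its left-invariant form to read off the Levi-Civita connection, then compute the curvature via $R(X, Y) Z = \nabla_X \nabla_Y Z - \nabla_Y \nabla_X Z - \nabla_{[X, Y]} Z$ and contract $\Rc(Y, Z) = \sum_i \varepsilon_i g(R(F_i, Y) Z, F_i)$ to obtain the Ricci operator $\rc$. A short calculation characterizes $\textrm{Der}(\mathfrak{h}_3)$: derivations are the $3 \times 3$ matrices whose first two rows are arbitrary and whose bottom row has the form $(0, 0, D_1^1 + D_2^2)$. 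Writing $\rc = c I + D$ is then a tiny linear problem, and I expect to obtain $c = 3/2$ with $D = \textrm{diag}(-1, -1, -2)$ for both metrics, establishing the nilsoliton property.

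For the solvable extension I would form $\mathfrak{s} = \Real H \oplus \mathfrak{h}_3$ with $[H, X] = D X$ for $X \in \mathfrak{h}_3$, set $\tilde{g}(H, H) = a$ as a free parameter, $\tilde{g}(H, F_i) = 0$, and $\tilde{g}|_{\mathfrak{h}_3} = g$, then rerun the Koszul-to-Ricci procedure on $\mathfrak{s}$. The Einstein equation $\Rc(\tilde{g}) = \tilde{c}\, \tilde{g}$ yields an algebraic constraint on $a$ of the form $a = -4 b^2$ with $b = -1$, giving $a = -4$ and Einstein constant $\tilde{c} = 3/2$, by the same pattern already established in Section 3.

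The main obstacle I anticipate is not the raw computation but the bookkeeping of signs $\varepsilon_i$ in the $g_1$ case: a single sign slip in either $\rc$ or in the curvature of $\tilde{g}$ would obstruct the nilsoliton or Einstein identity, and one must separately check that the sign of $a$ is compatible with the desired Lorentzian (or neutral) signature on $\mathfrak{s}$. Once the signs are tracked carefully and the derivation $D$ is identified for each metric, existence of the Einstein extension follows directly from the general construction of Section 3 specialized to $n = 1$ with the appropriate signature.
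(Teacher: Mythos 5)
Your proposal follows essentially the same route as the paper: $g_2$ is treated as the $n=1$ case of Section 3, and for $g_1$ one computes the Levi-Civita connection, the Ricci operator, and $\mathrm{Der}({\mathfrak h}_3)$ directly in a left-invariant pseudo-orthonormal frame, obtaining $c=3/2$, a diagonal derivation acting by $-1$ on the complement of the center and $-2$ on the center, and a rank-one solvable extension that is Einstein exactly when the new direction has norm $-4$, with Einstein constant $3/2$ --- all of which matches the paper's computation. The only caveat is notational: the derivation condition constrains the column of $D$ corresponding to the central direction (forcing $D^1_3=D^2_3=0$ and $D^3_3=D^1_1+D^2_2$ in your labeling), not the bottom row, though this does not affect the diagonal $D$ that solves the soliton equation.
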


\proof 
The metric $g_2$ was done in Section 3.  
We consider the metric $g_1$. 
Let our frame be defined by
$$F_1 = \frac{\partial }{\partial z} \ , 
F_2 = \frac{\partial }{\partial y} -x\frac{\partial }{\partial z} \ , 
F_3 = \frac{\partial }{\partial x} \ ,$$
and coframe 
$$\theta ^1  = x\ dy + dz \ , 
\theta ^2  = dy \ , 
\theta ^3  = dx  \ .$$
It is easy to check that the metric $g_1$ is represented as 
$$g_1 =(\theta ^1)^2+(\theta ^2)^2-(\theta ^3)^2 \ , $$ 
and all brackets $[F_i , F_j ]$ vanish except $[F_2, F_3] = F_1$ . 

Then the Levi-Civita connection of the metric $g_1$  is given by  
\begin{eqnarray}
(\nabla _{F_i} F_j)=\frac{1}{2} \left(
\begin{array}{cccc}
0 & F_3 & F_2 \\
F_3 & 0 & F_1 \\
F_2 & -F_1 & 0 \\
\end{array}
\right) , 
\end{eqnarray} 
and its Ricci tensor is expressed as 
$$ \Rc (F_1, F_1)  = -\frac{1}{2} , \quad \Rc (F_2, F_2)  = \frac{1}{2}, \quad \Rc (F_3, F_3)  = -\frac{1}{2}, \quad $$
and other components are $0$.
Obviously the Lorentzian metric $g_1$ is not Einstein. 
Its Ricci operator $\rc$ is given by   
\begin{eqnarray}
\rc = \frac{1}{2} \left(
\begin{array}{ccc}
-1 & 0 & 0 \\
0 & 1 & 0 \\
0 & 0 & 1 
\end{array}
\right) . 
\end{eqnarray} 

Next we compute the Lie algebra of derivations of ${\mathfrak g}$. 
Then we get 
$$\textrm{Der} ({\mathfrak g}) = 
\left\{ \left(
\begin{array}{ccc}
a^2_2 + a^3_3 & a^1_2 & a^1_3 \\
0 & a^2_2 & a^2_3 \\
0 & a^3_2 & a^3_3 
\end{array}
\right) \right\} . 
$$

If $g$ satisfies 
the nilsoliton equation (\ref{Soliton}), then it satisfies 
\begin{eqnarray} 
\begin{cases}
c & = \dfrac{3}{2} \\
a^2_2 & = a^3_3 = -1 \\
a^i_j & = 0 \quad otherwise. 
\end{cases} 
\end{eqnarray} 
Therefore $g_1$ is a nilsoliton.

Next we consider a solvable extension of $(H_3, g_1)$. 
Its algebra has generators $H$, $F_1$, $F_2$, $F_3$ and Lie brackets 
\begin{eqnarray*}
[H, F_1]' & = & D F_1=-2 F_1, \quad [H, F_2]' =  D F_2=-F_2, \\ 
\,  [H, F_3]' & = & D F_3=- F_3, \quad [F_2, F_3]'  =  F_1, 
\end{eqnarray*}
where $D$ satisfies the solvsoliton equation (\ref{Soliton}). 
We consider the left-invariant pseudo-Riemannian metric $\tilde{g}$ given by 
$\tilde{g} = h (\theta ^0)^2 +(\theta ^1)^2+(\theta ^2)^2-(\theta ^3)^2.$
Then the Levi-Civita connection of the metric $\tilde{g}$ is given by
\begin{eqnarray}
(\nabla _{F_i} F_j) = \left(
\begin{array}{cccc}
0 & 0 & 0 & 0 \\
2 F_1 & -\dfrac{2}{h} H & \dfrac{1}{2} F_3 & \dfrac{1}{2} F_2 \\
F_2 & \dfrac{1}{2} F_3 & -\dfrac{1}{h} H & \dfrac{1}{2} F_1 \\
F_3 & \dfrac{1}{2} F_2 & - \dfrac{1}{2} F_1 & -\dfrac{1}{h} H 
\end{array}
\right) , 
\end{eqnarray} 
and the non-zero components of the Ricci tensor are 
$$\Rc (H, H) = -6, \quad \Rc (F_1, F_1) = -\dfrac{8}{h}-\dfrac{1}{2},  \quad \Rc (F_2, F_2) = -\Rc (F_3, F_3) = -\dfrac{4}{h}+\dfrac{1}{2}.  $$
Therefore it is Einstein if and only if $h=-4$ and its Einstein constant is $\dfrac{3}{2}$. 
\qedhere

According to Theorem \ref{thm}, the left-invariant Lorentzian metric $g_1$ is a Ricci soliton (see also \cite{O10}). 





\subsection{A solvsoliton on $E(2)$}
Let $E(2)$ be the group of rigid motions of Euclidean $2$-space.
We consider the left-invariant Lorentzian metric given by 
$$g_1 =  dx^2 + (\cos x \ dy + \sin x \ dz)^2  -( -\sin x \ dy + \cos x \ dz )^2. 
$$ 
In \cite{O10}, we proved that this metric $g_1$ is a Lorentzian Ricci soliton. 
In this section, we prove the following
\begin{theorem}
The metric $g_1$ is a solvsoliton.
There exists a metric solvable extension which is an Einstein metric.
\end{theorem}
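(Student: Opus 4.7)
The plan is to mirror the computational framework of Sections 3 and 5.1. First I pick the left-invariant pseudo-orthonormal frame $F_1 = \partial/\partial x$, $F_2 = \cos x\, \partial/\partial y + \sin x\, \partial/\partial z$, $F_3 = -\sin x\, \partial/\partial y + \cos x\, \partial/\partial z$, with dual coframe $\theta^1 = dx$, $\theta^2 = \cos x\, dy + \sin x\, dz$, $\theta^3 = -\sin x\, dy + \cos x\, dz$, so that $g_1 = (\theta^1)^2 + (\theta^2)^2 - (\theta^3)^2$. A direct bracket computation gives $[F_1, F_2] = F_3$, $[F_1, F_3] = -F_2$, $[F_2, F_3] = 0$, identifying $\mathfrak{g}$ with the Lie algebra of $E(2)$. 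I then apply the Koszul formula in this frame (the usual vector-field derivatives vanish by left-invariance) to read off the Levi-Civita connection and extract the Ricci operator $\rc$.

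In parallel I compute $\textrm{Der}(\mathfrak{g})$ by writing a generic endomorphism $D = (a^i_j)$ and imposing the derivation identity on each of the three bracket relations. The image of every basis vector under $D$ must have vanishing $F_1$-component, and the restriction of $D$ to $\textrm{span}(F_2, F_3)$ is forced to be a scalar-plus-antisymmetric operator with $a^2_2 = a^3_3$ and $a^2_3 = -a^3_2$, giving a four-parameter family. The decomposition $\rc = c\,\textrm{Id} + D$ with $D \in \textrm{Der}(\mathfrak{g})$ then reduces to matching the diagonal $\rc$ against an element of this family; the constraint that the $F_1$-entry of $D$ be zero pins down $c$ uniquely, and forces $D$ into the diagonal form $\textrm{diag}(0, -c, -c)$. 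This establishes the solvsoliton statement.

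For the Einstein extension I form $\tilde{\mathfrak{g}} = \Real H \oplus \mathfrak{g}$ with $[H, F_i] = D F_i$ and consider the one-parameter family of metrics $\tilde{g}$ extending $g_1$ by $\tilde{g}(H,H) = h$, $\tilde{g}(H, F_i) = 0$. A second application of Koszul produces a connection whose new terms depend on $h$ while the remaining components coincide with the intrinsic connection of $g_1$. Computing $\tilde{\Rc}$ yields rational-in-$h$ diagonal entries, and imposing $\tilde{\Rc}(X, X) = \lambda\,\tilde{g}(X, X)$ for every $X \in \{H, F_1, F_2, F_3\}$ collapses to a single linear equation in $h$; the resulting Einstein constant $\lambda$ coincides with $c$, matching Lauret's formula $c = -\textrm{tr}(D^2)/\textrm{tr}(D)$.

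The main obstacle is bookkeeping rather than anything conceptual. Because $\mathfrak{e}(2)$ is solvable but not nilpotent, the derivation algebra is larger than in the Heisenberg case and the curvature of the extension acquires genuine cross-terms coupling $H$ with $F_2$ and $F_3$; the indefinite signature and the unknown sign of $h$ require care when forming the pseudo-Riemannian trace that produces $\tilde{\Rc}$. Once these contributions are properly assembled, the Einstein condition becomes an elementary algebraic equation whose unique solution gives the required metric solvable extension.
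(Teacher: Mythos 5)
Your proposal is correct and follows essentially the same route as the paper: compute the Levi-Civita connection and Ricci operator in the frame $F_1,F_2,F_3$, determine $\textrm{Der}(\mathfrak{g})$ (first row zero, $a^2_2=a^3_3$, $a^3_2=-a^2_3$), solve $\rc=c\,\mathrm{Id}+D$ to get $c=2$ and $D=\mathrm{diag}(0,-2,-2)$, and then adjoin $H$ with $[H,F_i]=DF_i$ and $\tilde{g}(H,H)=h$, finding $h=-4$ and Einstein constant $2=-\mathrm{tr}(D^2)/\mathrm{tr}(D)$. Your normalization of $\tilde{g}$ as restricting to $g_1$ on $\mathfrak{g}$ is the one required by the definition of a metric solvable extension and is what the paper's Ricci computation actually uses, even though the displayed formula for $\tilde{g}$ there carries a sign typo.
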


\proof 
Let our frame be defined as
$$
F_1  = \frac{\partial }{\partial x} \ ,  \ 
F_2 = \cos x \frac{\partial }{\partial y} + \sin x \frac{\partial }{\partial z}  \ , \ 
F_3  = -\sin x \frac{\partial }{\partial y} + \cos x \frac{\partial }{\partial z} \ ,
$$
and coframe 
$$
\theta ^1  = dx \ ,  \ 
\theta ^2  = \cos x \ dy +\sin x \ dz \ , \
\theta ^3  = -\sin x \ dy +\cos x \ dz \ .
$$
It is easy to check that the metric $g_1$ is represented as  
$$g_1 =(\theta ^1)^2+(\theta ^2)^2-(\theta ^3)^2 \ , $$ 
with 
$$[F_1, F_2] = F_3 \ , \ [F_2, F_3] = 0 \ , \ [F_3, F_1] = F_2. \ $$
Then the Levi-Civita connection of the metric $g_1$ is given by
\begin{eqnarray}\label{levi3}
(\nabla _{F_i} F_j) = \left(
\begin{array}{cccc}
0 & 0 & 0 \\
-F_3 & 0 & -F_1 \\
F_2 & -F_1 & 0 \\
\end{array}
\right) , 
\end{eqnarray} 
and its Ricci tensor vanishes except $\Rc(F_1, F_1)  = 2$ .
Its Ricci operator $\rc$ is given by 
\begin{eqnarray}
\rc = \left(
\begin{array}{ccc}
2 & 0 & 0 \\
0 & 0 & 0 \\
0 & 0 & 0
\end{array}
\right) . 
\end{eqnarray} 

Next we compute the Lie algebra of derivations of ${\mathfrak g}$. 
We get 
$$\textrm{Der} ({\mathfrak g}) = 
\left\{ \left(
\begin{array}{ccc}
0 & 0 & 0 \\
a^2_1 & a^2_2 & a^2_3 \\
a^3_1 & -a^2_3 & a^3_3
\end{array}
\right) \right\} . 
$$

If $g$ satisfies 
the solvsoliton equation (\ref{Soliton}), then it satisfies 
\begin{eqnarray} 
\begin{cases}
c & = 2 \\
a^2_2 & = a^3_3 = -2 \\
a^i_j & = 0 \quad otherwise. 
\end{cases} 
\end{eqnarray} 
Therefore $g_1$ is a solvsoliton.

Next we consider a solvable extension of $E(2)$. 
Its algebra has generators $H$, $F_1$, $F_2$, $F_3$ and Lie brackets
\begin{eqnarray*}
[H, F_i]' = D F_i, \quad [F_1, F_2]' = F_3, \quad [F_2, F_3]' = 0, \quad [F_3, F_1]' = F_2,
\end{eqnarray*}
where $D$ satisfies the solvsoliton equation (\ref{Soliton}). 
We consider the left-invariant pseudo-Riemannian metric $\tilde{g}$ given by 
$\tilde{g} = h (\theta ^0)^2 -(\theta ^1)^2+(\theta ^2)^2+(\theta ^3)^2.$
Then the Levi-Civita connection of the metric $\tilde{g}$ is given by
\begin{eqnarray}
(\nabla _{F_i} F_j) = \left(
\begin{array}{cccc}
0 & 0 & 0 & 0 \\
0 & 0 & 0 & 0 \\
2 F_2 & -F_3 & -\dfrac{2}{h} H & - F_1 \\
2 F_3 & F_2 & - F_1 & \dfrac{2}{h} H 
\end{array}
\right) , 
\end{eqnarray} 
and the non-zero components of the Ricci tensor are 
$$\Rc (H, H) = -8, \quad \Rc (F_1, F_1) = 2,  \quad \Rc (F_2, F_2) = -\dfrac{8}{h},  \quad \Rc (F_3, F_3) = \dfrac{8}{h}. $$
Therefore it is Einstein if and only if $h=-4$ and its Einstein constant is $2$. 
\qedhere

According to Theorem \ref{thm}, the left-invariant Lorentzian metric $g_1$ is a Ricci soliton (see also \cite{O10}). 




\subsection{A solvsoliton on $E(1,1)$}
Let $E(1, 1)$ be the group of rigid motions of Minkowski $2$-space.
We consider the left-invariant Lorentzian metric given by 
$$g_1 = -dx^2 +(e^{-x} dy + e^{x} dz )^2 + (e^{-x} dy - e^{x} dz )^2.$$
In \cite{O10} we proved that the above metric $g_1$ is a Lorentzian Ricci soliton. 
In this section, we prove the following 
\begin{theorem}
This metric $g_1$ is a solvsoliton.
There exists a metric solvable extension which is an Einstein metric.
\end{theorem}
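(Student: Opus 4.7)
The plan is to mirror the argument of Section 5.2 for $E(2)$. First I introduce the pseudo-orthonormal coframe $\theta^1 = dx$, $\theta^2 = e^{-x}\,dy + e^x\,dz$, $\theta^3 = e^{-x}\,dy - e^x\,dz$, which rewrites the metric as $g_1 = -(\theta^1)^2 + (\theta^2)^2 + (\theta^3)^2$, together with the dual frame $F_1 = \partial/\partial x$, $F_2 = \tfrac{1}{2}(e^x\,\partial/\partial y + e^{-x}\,\partial/\partial z)$, $F_3 = \tfrac{1}{2}(e^x\,\partial/\partial y - e^{-x}\,\partial/\partial z)$. A direct calculation gives $[F_1, F_2] = F_3$, $[F_1, F_3] = F_2$, $[F_2, F_3] = 0$, identifying $\mathfrak g$ with $\mathfrak e(1, 1)$. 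Koszul's formula then produces a Levi--Civita connection whose only nonzero components are $\nabla_{F_2} F_1 = -F_3$, $\nabla_{F_3} F_1 = -F_2$, and $\nabla_{F_2} F_3 = \nabla_{F_3} F_2 = -F_1$.

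From the resulting curvature the Ricci tensor is immediate: its only nonvanishing component is $\Rc(F_1, F_1) = -2$, and since $g_1(F_1, F_1) = -1$ the Ricci operator is $\rc = \mathrm{diag}(2, 0, 0)$. Next I determine $\textrm{Der}(\mathfrak g)$ by imposing the derivation identities on the three brackets. The equations collapse to $a^1_j = 0$ for $j = 1, 2, 3$, together with $a^2_2 = a^3_3$ and $a^2_3 = a^3_2$, leaving $a^2_1$ and $a^3_1$ free. The soliton equation $\rc = cI + D$ then forces $c = 2$ and $D = \mathrm{diag}(0, -2, -2)$, so $g_1$ is a solvsoliton.

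For the metric solvable extension I set $\mathfrak s = \mathbb R H \oplus \mathfrak e(1, 1)$ with $[H, F_i]' = D F_i$ and consider $\tilde g = h(\theta^0)^2 + g_1$. Applying Koszul once more on $\mathfrak s$ yields, beyond the connection already computed on $\mathfrak e(1, 1)$, the new components $\nabla_{F_2} H = 2F_2$, $\nabla_{F_3} H = 2F_3$, and $\nabla_{F_2} F_2 = \nabla_{F_3} F_3 = -\tfrac{2}{h} H$, while $\nabla_H$ vanishes on every basis vector. A routine curvature calculation then gives a diagonal Ricci tensor with $\Rc(H, H) = -8$, $\Rc(F_1, F_1) = -2$, and $\Rc(F_2, F_2) = \Rc(F_3, F_3) = -8/h$; demanding $\Rc = \lambda \tilde g$ forces $h = -4$ with Einstein constant $\lambda = 2$.

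Conceptually nothing new is required beyond the $E(2)$ argument; the main obstacle is sign bookkeeping. Because $F_1$ is timelike here the sign of $\Rc(F_1, F_1)$ is opposite to that of the corresponding eigenvalue of $\rc$, and in the Ricci trace for $\tilde g$ the $H$-direction contributes $(1/h)\tilde g(R(H, \cdot)\cdot, H)$ rather than $\pm\tilde g(\cdot, \cdot)$ since $\tilde g(H, H) = h$ is not unit. Once these adjustments are applied uniformly, both the solvsoliton identification and the Einstein value fall out directly.
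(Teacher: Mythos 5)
Your proposal is correct and follows essentially the same route as the paper: the same frame and coframe, the Koszul computation of the connection and Ricci operator $\rc=\mathrm{diag}(2,0,0)$, the determination of $\textrm{Der}(\mathfrak g)$ forcing $c=2$ and $D=\mathrm{diag}(0,-2,-2)$, and the rank-one solvable extension that is Einstein exactly when $h=-4$ with constant $2$. Your signs $\nabla_{F_3}F_1=-F_2$ and $\nabla_{F_3}F_3=-\tfrac{2}{h}H$ (and the relation $a^2_2=a^3_3$ in $\textrm{Der}(\mathfrak g)$) are in fact the correct ones where the paper's displayed matrices contain typographical sign slips, and they lead to the same Ricci data and conclusions as in the paper.
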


\proof
Let our frame be defined as
$$
F_1 = \frac{\partial }{\partial x} \ ,  \ 
F_2 = \dfrac{1}{2} \Big( e^{x} \frac{\partial }{\partial y} + e^{-x}\frac{\partial }{\partial z} \Big) \ , \ 
F_3 = \dfrac{1}{2} \Big( e^{x} \frac{\partial }{\partial y} - e^{-x} \frac{\partial }{\partial z} \Big)  ,
$$
and coframe 
$$
\theta ^1  = dx, \quad  
\theta ^2  = e^{-x} \ dy +e^x \ dz, \quad  
\theta ^3  = e^{-x} \ dy -e^x \ dz.  
$$
It is easy to check that the metric $g_1$ is represented as
$$g_1 =-(\theta ^1)^2+(\theta ^2)^2+(\theta ^3)^2 \ , $$ 
with 
$$[F_1, F_2] = F_3 \ , \ [F_2, F_3] = 0 \ , \ [F_3, F_1] = -F_2. \ $$
Then the Levi-Civita connection of the metric $g_1$ is given by 
\begin{eqnarray}\label{levi3}
(\nabla _{F_i} F_j) = \left(
\begin{array}{cccc}
0 & 0 & 0 \\
-F_3 & 0 & -F_1 \\
F_2 & -F_1 & 0 \\
\end{array}
\right) , 
\end{eqnarray} 
and its Ricci tensor vanishes except $\Rc (F_1, F_1)  = -2$.
Obviously the Lorentzian metric $g_1$ is not Einstein. 
Its Ricci operator $\rc$  is given by 
\begin{eqnarray}
\rc = \left(
\begin{array}{ccc}
2 & 0 & 0 \\
0 & 0 & 0 \\
0 & 0 & 0
\end{array}
\right) . 
\end{eqnarray} 

Next we compute the Lie algebra of derivations of ${\mathfrak g}$. 
We get 
$$\textrm{Der} ({\mathfrak g}) = 
\left\{ \left(
\begin{array}{ccc}
0 & 0 & 0 \\
a^2_1 & a^2_2 & a^2_3 \\
a^3_1 & a^2_3 & a^3_3
\end{array}
\right) \right\} . 
$$

If $g$ satisfies 
the solvsoliton equation (\ref{Soliton}), then it satisfies 
\begin{eqnarray} 
\begin{cases}
c & = 2 \\
a^2_2 & = a^3_3 = -2 \\
a^i_j & = 0 \quad otherwise. 
\end{cases} 
\end{eqnarray} 
Therefore $g_1$ is a solvsoliton.

Next we consider a solvable extension of $E(1,1)$. 
Its algebra has generators $H$, $F_1$, $F_2$, $F_3$ and Lie brackets 
\begin{eqnarray*}
[H, F_i]' = D F_i, \quad [F_1, F_2]' = F_3, \quad [F_2, F_3]' = 0, \quad [F_3, F_1]' = -F_2, 
\end{eqnarray*}
where $D$ satisfies the solvsoliton equation (\ref{Soliton}). 
We consider the left-invariant pseudo-Riemannian metric $\tilde{g}$ given by 
$\tilde{g} = h (\theta ^0)^2 -(\theta ^1)^2+(\theta ^2)^2+(\theta ^3)^2.$
Then the Levi-Civita connection of the metric $\tilde{g}$ is given by  
\begin{eqnarray}
(\nabla _{F_i} F_j) = \left(
\begin{array}{cccc}
0 & 0 & 0 & 0 \\
0 & 0 & 0 & 0 \\
2 F_2 & -F_3 & -\dfrac{2}{h} H & - F_1 \\
2 F_3 & F_2 & - F_1 & \dfrac{2}{h} H 
\end{array}
\right) , 
\end{eqnarray} 
and the non-zero components of the Ricci tensor are 
$$\Rc (H, H) = -8, \quad \Rc (F_1, F_1) = -2,  \quad \Rc (F_2, F_2) = -\dfrac{8}{h},  \quad \Rc (F_3, F_3) = -\dfrac{8}{h}.  $$
Therefore it is Einstein if and only if $h=-4$ and its Einstein constant is $2$. 
\qedhere

According to Theorem \ref{thm}, the left-invariant Lorentzian metric $g_1$ is a Ricci soliton (see also \cite{O10}). 


\section*{Acknowledgments} 
The author gratefully acknowledges many helpful comments from Professor Ryoichi Kobayashi.
The author would like to thank Professor Ohnita, Professor Hiroshi Tamaru and Professor Parker, for support and comments. 
The authors wish to thank the referees' valuable comments and feedback on our manuscript.

\end{document}